\newcommand\Modified{Aug 14, 2013}

\documentclass[11pt]{amsart}

\usepackage{amsfonts, amsmath,amscd, amssymb, latexsym, mathrsfs, slashed, stmaryrd, verbatim, wasysym }
\usepackage[all]{xy}
\usepackage{hyperref}

\newcommand\datver[1]{\def\datverp%
{\par\boxed{\boxed{\text{Version: #1; Run: \today}}}}}

\newtheorem{theorem}{Theorem}
\newtheorem{definition}[theorem]{Definition}

\newtheorem{example}{Example}
\newtheorem{lemma}[theorem]{Lemma}

\newtheorem{proposition}[theorem]{Proposition}
\newtheorem{remark}[theorem]{Remark}

\numberwithin{equation}{section}
\numberwithin{theorem}{section}

 \usepackage{color}
\definecolor{darkgreen}{cmyk}{1,0,1,.2}
\definecolor{m}{rgb}{1,0.1,1}

\renewcommand{\bar}{\overline}

\renewcommand{\hat}[1]{\widehat{#1}}
\newcommand{\rest}[1]{\big\rvert_{#1}} 

\newcommand{\wt}[1]{\widetilde{#1}}


\newcommand\eps\varepsilon

\newcommand{\IC}[1]{\operatorname{\mathbf{IC}}_{\overline{#1}}^{\bullet}}

\renewcommand{\sc}[1]{\mathbf{ #1 }^{\bullet}} 
\newcommand\lra{\longrightarrow}
\newcommand\xlra[1]{\xrightarrow{\phantom{x} #1 \phantom{x}}}


\newcommand\iie{\operatorname{iie}}


\newcommand\Iie{{}^{\iie}}

\newcommand\CI{{\mathcal{C}}^{\infty}}
\newcommand\CIc{{\mathcal{C}}^{\infty}_c}


\newcommand{\lrpar}[1]{\left( #1 \right)}



\newcommand{\even}{\operatorname{even}}

\newcommand{\Hom}{\operatorname{Hom}}
\newcommand{\Ht}{\operatorname{H}}
\newcommand\id{\operatorname{id}}

\newcommand{\loc}{\operatorname{loc}}
\renewcommand{\mid}{\operatorname{mid}}

\newcommand{\odd}{\operatorname{odd}}

\newcommand{\reg}{ \mathrm{reg} }

\newcommand{\Sh}{\operatorname{Sh}}

\newcommand{\supp}{\operatorname{supp}}

\newcommand\Mand{\text{ and }}

\newcommand\Mfor{\text{ for }}
\newcommand\Mforall{\text{ for all }}

\newcommand\Mif{\text{ if }}

\newcommand\Mor{\text{ or }}

\newcommand\Mst{\text{ s.t. }}



\DeclareMathAlphabet{\mathpzc}{OT1}{pzc}{m}{it}


\newcommand\paperintro%
        {%
         }
\newcommand\paperbody%
        {%
         }


\newcommand\bbB{\mathbb{B}}

\newcommand\bbH{\mathbb{H}}

\newcommand\bbR{\mathbb{R}}

\newcommand\bA{\mathbf{A}}

\newcommand\bE{\mathbf{E}}
\newcommand\bF{\mathbf{F}}

\newcommand\bH{\mathbf{H}}

\newcommand\bL{\mathbf{L}}

\newcommand\bS{\mathbf{S}}

\newcommand\bW{\mathbf{W}}

\newcommand\bOm{\mathbf{\Omega}}

\newcommand\cA{\mathcal{A}}

\newcommand\cC{\mathcal{C}}
\newcommand\cD{\mathcal{D}}
\newcommand\cE{\mathcal{E}}

\newcommand\cH{\mathcal{H}}

\newcommand\cL{\mathcal{L}}

\newcommand\cP{\mathcal{P}}

\newcommand\cU{\mathcal{U}}
\newcommand\cV{\mathcal{V}}
\newcommand\cW{\mathcal{W}}

\newcommand\sA{\mathscr{A}}

\newcommand\sD{\mathscr{D}}

\newcommand\sL{\mathscr{L}}



\marginparwidth 0pt
\oddsidemargin  0pt
\evensidemargin  0pt
\marginparsep 0pt
\topmargin   0pt
\textwidth 6.5 in 
\textheight 8.5 in



\datver{\Modified}

\begin{document}
\title{Refined intersection homology on non-Witt spaces}

\author{Pierre Albin}
\address{University of Illinois at Urbana-Champaign}
\email{palbin@illinois.edu}
\author{Markus Banagl}
\address{Mathematisches Institut, Universit\"at Heidelberg}
\email{banagl@mathi.uni-heidelberg.de}
\author{Eric Leichtnam}
\address{CNRS Institut de Math\'ematiques de Jussieu}
\author{Rafe Mazzeo}
\address{Department of Mathematics, Stanford University}
\email{mazzeo@math.stanford.edu}
\author{Paolo Piazza}
\address{Dipartimento di Matematica, Sapienza Universit\`a di Roma}
\email{piazza@mat.uniroma1.it}

\begin{abstract}
We develop a generalization to non-Witt spaces of the intersection homology theory of Goresky-MacPherson. 
The second author has described the self-dual sheaves compatible with intersection homology, and the other authors have described a generalization of Cheeger's $L^2$ de Rham cohomology. In this paper we extend both of these cohomologies by describing all sheaf complexes in the derived category of constructible sheaves that are compatible with middle perversity intersection cohomology, though not necessarily self-dual. 
On Thom-Mather stratified spaces this refined intersection cohomology theory coincides with the analytic de Rham theory.
\end{abstract}

\maketitle

\tableofcontents

The singular homology of a singular space generally does not satisfy Poincar\'e Duality. 
Remarkably, Goresky and MacPherson showed that if the singularities are stratified then there is an adapted homology theory, intersection homology, that satisfies a generalized Poincar\'e Duality \cite{GM}. On `Witt spaces' \cite{Siegel} they defined middle perversity intersection homology groups satisfying Poincar\'e Duality and defining a bordism-invariant signature.
Working independently, Cheeger discovered that the de Rham cohomology of the $L^2$-differential forms with respect to an adapted metric on the regular part of a Witt space also satisfies Poincar\'e Duality \cite{Cheeger:Hodge}. In fact, Cheeger was able to show that his analytic theory is dual to the topological theory of Goresky-MacPherson.

A stratified pseudomanifold is Witt if the lower middle perversity middle-dimensional rational intersection homology of all links of strata of odd codimension vanishes.
If a space is not Witt then there are two middle perversity intersection chain sheaves, one for lower middle perversity and one for upper middle perversity, and the canonical morphism between them is not a
quasi-isomorphism. 
In terms of the sheaf theoretic formulation of intersection homology in \cite{GM2}, this means that none of the Deligne sheaves $\IC p$ is self-dual. A theory of self-dual sheaves on non-Witt spaces, compatible with intersection homology, has been developed by the second author in \cite{BanaglShort}. These sheaves `sit in between' the two middle perversity intersection sheaves and 
provide cohomology theories on non-Witt spaces satisfying Poincar\'e Duality. 
Not every non-Witt space carries a self-dual sheaf compatible with intersection homology, and spaces that do are here termed {\em $L$-spaces}. Strikingly, in \cite{Banagl:LClasses} it is shown that the signature of a self-dual sheaf on an $L$-space is a bordism-invariant and is the same for every choice of self-dual sheaf!

Cheeger's $L^2$ de Rham cohomology theory has been extended to non-Witt spaces by some of the authors, \cite{ALMP13.1}, 
extending \cite{Cheeger:Conic} and \cite{ALMP11}. The failure of the Witt condition means analytically that one must impose
a boundary condition at the singular strata in order that the exterior derivative and the Hodge de Rham operator have closed
domains. These boundary conditions refine the middle perversities of Goresky-MacPherson and were termed 
{\em mezzoperversities} in \cite{ALMP13.1}. Every analytic mezzoperversity has a dual mezzoperversity and the 
intersection pairing induces a generalized Poincar\'e Duality, refining that of intersection homology.
Not every non-Witt space carries a self-dual mezzoperversity, and smoothly stratified spaces that do 
are termed Cheeger spaces in \cite{ALMP13.1}. 

A natural conjecture, which we establish in this paper, is that the $L^2$ cohomology associated to a self-dual mezzoperversity 
and the cohomology of a self-dual sheaf coincide. (Note, however, that as discussed below, the former theory requires a 
Thom-Mather stratification, while the latter only requires a topologically stratified space.) More generally, since the analytic cohomology is defined for mezzoperversities that are not self-dual, we provide an extension of the theory of \cite{BanaglShort} to non-self-dual sheaves that are compatible with intersection homology.

Specifically, we define a category $RP(\hat X)$ associated to a topological pseudomanifold $\hat X$ which captures the candidate sheaves for refining intersection homology on non-Witt spaces. If $\hat X$ is Witt, then $RP(\hat X)$ consists up to isomorphism of a single complex of sheaves
\begin{equation*}
	\hat X \text{ Witt} \implies RP(\hat X) = \{ \IC n \cong \IC m \},
\end{equation*}
while on a general pseudomanifold $RP(\hat X)$ contains $\IC n$ and $\IC m$ and possibly many other sheaf complexes. In particular the category of self-dual sheaves of  \cite{BanaglShort} is a full subcategory of the category of refined middle perversity sheaves,
\begin{equation*}
	SD(\hat X) \subseteq RP(\hat X).
\end{equation*}

Any complex $\bS^\bullet$ in $RP(\hat X)$ will have unique maps
\begin{equation*}
	\IC m \lra \bS^{\bullet} \lra \IC n
\end{equation*}
compatible with the normalizations on the regular part of $\hat X.$
In fact, each $\bS^{\bullet}$ will be shown to be obtained from $\IC m$ by the `addition' of a {\em topological mezzoperversity}, consisting of a compatible choice of locally constant sheaves at each stratum where the Witt condition does not hold.

Given a topological mezzoperversity $\cL,$ we will use a modified Deligne truncation operator $\tau_{\leq \cL}$, which is essentially Banagl's truncation functor
$\tau_{\leq p} (\cdot, \cdot): D(\cdot) \rtimes MS(\cdot)\to D(\cdot)$ constructed in 
\cite{BanaglShort},
to define an object in $RP(\hat X),$
\begin{equation*}
	\mathbf{IC}^\bullet_{\cL} =
	\tau_{\leq \cL(n)} Ri_{n*} \cdots 
	\tau_{\leq \cL(2)} Ri_{2*}\bbR_{\cU_2},
\end{equation*}
and conversely we will show that every object of $RP(\hat X)$ is (up to isomorphism) of this form for a unique topological mezzoperversity $\cL.$

As with the celebrated construction of Deligne-Goresky-MacPherson, we will show that any mezzoperversity gives rise to a \emph{unique} object up to isomorphism in $D(\hat X).$ We will define the `dual topological mezzoperversity' and show that the corresponding homology groups satisfy a global duality. If a topological mezzoperversity is self-dual, then it will be equivalent to a Lagrangian structure in the sense of \cite{BanaglShort} and the resulting object in $D(\hat X)$ is the corresponding self-dual sheaf.

Finally we restrict to Thom-Mather stratified spaces and show that the $L^2$ de Rham cohomology spaces from \cite{ALMP13.1} are canonically isomorphic to refined intersection cohomology groups. Indeed starting from any {\em analytic} mezzoperversity, as we will now refer to the mezzoperversities from \cite{ALMP13.1}, we obtain a sheaf complex in $RP(\hat X)$ by sheafifying the analytic domains associated to the mezzoperversity.
The topological mezzoperversity of this sheaf complex is equivalent to the given analytic mezzoperversity, and we show that every topological mezzoperversity corresponds to an analytic one in this way. 
Since the global sections of the sheafifications turn out to agree with the global analytic domain
of the mezzoperversity,
the $L^2$ de Rham cohomology coincides with refined intersection cohomology.

\medskip
\noindent
{\bf Acknowledgements.} 
P.A. was partly supported by NSF Grant DMS-1104533 and an IHES visiting position and thanks Sapienza 
Universit\`a di Roma, Stanford, and Institut de Math\'ematiques de Jussieu for their hospitality and support.
M.B. was supported in part by a research grant of the Deutsche Forschungsgemeinschaft.
E. L. thanks Sapienza Universit\`a di Roma 
for hospitality during several week-long visits; 
financial support was  provided by CNRS-INDAM 
through the bilateral project  ``Noncommutative Geometry."
R.M. acknowledges support by NSF Grant DMS-1105050. 
P.P. thanks the {\it Projet Alg\`ebres d'Op\'erateurs} of {\it Institut
de Math\'ematiques de Jussieu}
for hospitality during several short visits and a two months long visit
 in the Spring of 2013; financial support was
provided by Universit\'e Paris 7, Instituto Nazionale di Alta Matematica and CNRS-INDAM 
(through the bilateral project  ``Noncommutative Geometry") and Ministero dell'Universit\`a e della Ricerca Scientifica 
(through the project ``Spazi di Moduli e Teoria di Lie"). \\

\smallskip

The authors are grateful to Francesco Bei for many helpful conversations.

\section*{Notation}
Note that this notation is different from that employed in \cite{ALMP11, ALMP13.1}.

Let $\hat X$ be an $n$-dimensional pseudomanifold with a fixed topological stratification
\begin{equation*}
	\emptyset = X_{-1} \subset X_0 \subset \cdots \subset X_{n-2} \subset X_n = \hat X
\end{equation*}
such that each $X_k$ is a closed subset of $\hat X,$ and the $k^{\text{th}}$ stratum
\begin{equation*}
	Y_k = X_k \setminus X_{k-1}
\end{equation*}
is a manifold of dimension $k.$ 
We assume that $Y_n = \hat X \setminus X_{n-2}$ is dense.
We will often work with the increasing sequence of open sets
\begin{equation*}
	\cU_k = \hat X \setminus X_{n-k}
\end{equation*}
for which we have
\begin{equation*}
	\cU_{k+1} = \cU_k \cup Y_{n-k},
\end{equation*}
and we make frequent use of the inclusions of these two subsets: 
\begin{equation*}
	\xymatrix{
	\cU_k \ar@{^(->}[r]^{i_k} & \cU_{k+1}  \ar@{<-^)}[r]^{j_k} & Y_{n-k}.
	}
\end{equation*}
The singular and regular parts of $\hat X$ are $X_{n-2}$ and $\cU_2$, respectively.
Each point $x \in X_{k}\setminus X_{k-1}$ has distinguished neighborhoods in $X$ homeomorphic to the product of a Euclidean ball of dimension $k$ and an open cone 
\begin{equation*}
	\bbB^k \times C^\circ(Z)
\end{equation*}
for some compact stratified space $Z$ of dimension $n-k-1,$ called the link of $Y_k$ at $x.$
Moreover the stratifications of $Z$ and $\hat X$ are compatible through this homeomorphism.

We denote the category of sheaves of $\bbR$-vector spaces on $\hat X$ by $\Sh(\hat X),$ the category of constructible differential graded sheaves (i.e., complexes $\sc A$ such that each $\sc A|_{Y_k}$ has locally constant cohomology sheaves and has finitely generated stalk cohomology) by $\Sh^{\bullet}(\hat X),$ and the derived category of bounded constructible complexes by $D(\hat X).$ We will denote the derived sheaf of an object $\sc A$ in $D(\hat X),$ i.e., the sheaf associated to the presheaf $( \cU \mapsto \Ht^\bullet(\sc A(\cU)) ),$ by
\begin{equation*}
	\bH^\bullet(\sc A) \in \Sh^{\bullet}(\hat X)
\end{equation*}
and its global hypercohomology, i.e., the cohomology of the global sections of an injective resolution, by $\bbH^\bullet(\hat X; \sc A).$ 
We denote the intersection sheaf complex associated to the lower middle perversity $\bar m$ of Goresky-MacPherson by $\IC m,$ and similarly for the upper middle perversity $\bar n$ by $\IC n.$ For a review of these concepts we refer to \cite{GM2, Borel, BanaglLong}.

When we study differential forms, we will assume that the stratification is `smooth' as described below.
In the introduction we distinguish between {\em topological} mezzoperversities and {\em analytic} mezzoperversities. In the text we will reserve this distinction until the final section, before that we develop the notion of topological mezzoperversity and refer to it as simply a mezzoperversity.

\section{Refined middle perversity sheaves}

We define the objects of interest in the derived category, following \cite{GM2, BanaglShort}.
In this section, we work with topologically stratified pseudomanifolds and sheaves of $\bbR$-vector spaces.
Let $i_x: \{ x \} \hookrightarrow \hat X$ denote the inclusion of a point $x\in \hat X$.

\begin{definition}
Let $\hat X$ be a stratified orientable pseudomanifold and $\sc S$ a constructible bounded complex of sheaves. We say that $\sc S$ satisfies the axioms $[RP],$ or is a {\bf refined middle perversity complex of sheaves}, provided:\\
\begin{itemize}
\item [\textup{(}{\bf RP1}\textup{)}]
Normalization: There is an isomorphism of the restriction of $\sc S$ to the regular part $\cU_2$ of $\hat X$ and the constant rank $1$ sheaf over $\cU_2,$
\begin{equation*}
	\nu^{\bS}:\bbR_{\cU_2} \xlra{\cong} \sc S\rest{\cU_2}
\end{equation*}
\item [\textup{(}{\bf RP2}\textup{)}]
Lower bound: $\Ht^\ell(i_x^*\sc S) =0$ for any $x\in \hat X$ and $\ell < 0.$
\item [\textup{(}{\bf RP3}\textup{)}]
$\bar n$-stalk vanishing: $\Ht^\ell(i_x^*\sc S) =0$ for  any $x\in \cU_{k+1}\setminus \cU_2$ and $\ell > \bar n(k).$
\item [\textup{(}{\bf RP4}\textup{)}]
$\bar m$-costalk vanishing: $\Ht^\ell(i_x^!\sc S) =0$ for  any $x\in Y_{n-k}$ and $\ell \leq \bar m(k)+n-k+1.$
\end{itemize}
We denote by $RP(\hat X)$ the full subcategory of $D(\hat X)$ whose objects satisfy the axioms $[RP].$
\end{definition}

\begin{remark}
For simplicity we work with orientable $\hat X$ and the constant sheaf over the regular part of $\hat X.$ With essentially no change one can allow a locally constant system over general $\hat X.$ For example, the canonical normalization over a non-orientable $\hat X$ is to use the orientation sheaf over $\cU_2.$
\end{remark}

\begin{remark}
The category $RP(\hat X)$ is closely related to the category $EP(\hat X)$ of equiperverse sheaves defined in \cite[\S 9.3.1]{BanaglLong}.
Indeed, the stalk and costalk vanishing conditions demanded in the respective definitions agree on strata of odd codimension.
For $EP,$ cohomology stalks on a stratum of even codimension $k$ must vanish above $\bar n(k) +1,$ while for $RP$ they already have to vanish above $\bar n(k).$
Similarly, there is a discrepancy of one degree for the costalk vanishing conditions.
Hence $RP(\hat X)$ is always a full subcategory of $EP(\hat X),$ and they coincide if $\hat X$ has only singular strata of odd codimension.
If $\hat X$ has only strata of even codimension, then $EP(\hat X)$ equals ${}^{\bar m}\cP(X),$ the category of middle perverse sheaves on $\hat X,$ while $RP(\hat X)$ contains up to isomorphism only $\IC m.$
\end{remark}

As in \cite{GM2}, the axiom (RP4) is equivalent to knowing that, at each $x \in \cU_{k+1} \setminus \cU_k,$
the attaching maps 
\begin{equation}\label{eq:AttachingAxiom}
	\bH^j(\sc S_x) \lra \bH^j(R i_{k*}i_k^* \sc S)_x
\end{equation}
are isomorphisms for all $j \leq \bar m(k).$ \\

\begin{remark}
One could also define refined sheaves for other perversities, but as our examples correspond to middle perversity, we do not explore this here.
\end{remark}

Let us immediately note two important properties of the axioms.

\begin{proposition} \label{prop:FirstProp}$ $
\begin{itemize}
\item [i)] If $\sc S \in D(\hat X)$ satisfies axioms [RP], and $\cD\sc S$ is its Verdier dual, then $(\cD\sc S) [-n]$ satisfies the axioms [RP].
\item [ii)] {[Cappell-Shaneson]} If $\sc A$ and $\sc B$ are in $RP(\hat X),$ the restriction to $\cU_2$ induces an injective map
\begin{equation*}
	\Hom_{D(\hat X)}(\sc A, \sc B) \lra 
	\Hom_{D(\cU_2)}(\sc A\rest{\cU_2}, \sc B\rest{\cU_2}).
\end{equation*}
\end{itemize}

\end{proposition}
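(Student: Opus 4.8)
\medskip
\noindent\textbf{Proof proposal.}
For part (i) the plan is to reduce everything to how Verdier duality behaves on a point. Since $\cD$ preserves bounded constructible complexes and commutes with shifts, $\cT:=(\cD\sc S)[-n]$ lies in $D(\hat X)$; using $\cD\, i_x^{!}\cong i_x^{*}\,\cD$ and $\cD\, i_x^{*}\cong i_x^{!}\,\cD$, and that on a point $\cD$ is ordinary linear duality, one gets
\[
	\Ht^{\ell}(i_x^{*}\cT)\cong\bigl(\Ht^{n-\ell}(i_x^{!}\sc S)\bigr)^{\vee}
	\qquad\text{and}\qquad
	\Ht^{\ell}(i_x^{!}\cT)\cong\bigl(\Ht^{n-\ell}(i_x^{*}\sc S)\bigr)^{\vee}.
\]
From here (RP1) for $\cT$ follows from $\omega_{\cU_2}\cong\bbR_{\cU_2}[n]$ on the oriented $n$-manifold $\cU_2$; and after substituting the complementarity relation $\bar m(k)=k-2-\bar n(k)$, axiom (RP3) for $\cT$ becomes exactly axiom (RP4) for $\sc S$, while axiom (RP4) for $\cT$ becomes exactly axiom (RP3) for $\sc S$. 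The one axiom that is not literally the Verdier dual of an axiom of $\sc S$ is the lower bound (RP2) for $\cT$: by the first displayed formula it is equivalent to $\Ht^{j}(i_x^{!}\sc S)=0$ for $j>n$, i.e.\ to the assertion that an $[RP]$ complex has costalk cohomology concentrated in degrees $\le n$.

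I would isolate that assertion as a lemma and prove it from the stalk conditions alone. From (RP2)--(RP3), $\mathcal{H}^{q}(\sc S)=0$ for $q<0$, and for $q\ge 1$ the sheaf $\mathcal{H}^{q}(\sc S)$ is supported on the closed union of strata of codimension $\ge 2q+1$, hence on a set of dimension $\le n-2q-1$. Via the spectral sequence $\Ht^{p}(V;\mathcal{H}^{q}(\sc S))\Rightarrow\Ht^{p+q}(V;\sc S)$ and the bound $\Ht^{p}(V;\mathcal{F})=0$ for a single sheaf $\mathcal{F}$ when $p>\dim V$, this forces $R\Gamma(V;\sc S\rest{V})$ into degrees $\le\dim V$ for any locally closed $V\subseteq\hat X$ carrying the same stalk estimates. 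For $x\in Y_{n-k}$, a distinguished neighbourhood $U\cong\bbB^{n-k}\times C^{\circ}(Z)$ is a cone over an $(n-1)$-dimensional stratified space $W$ with $U\setminus\{x\}\cong W\times(0,1)$ compatibly with the stratification, so $R\Gamma(U\setminus\{x\};\sc S)\cong R\Gamma(W;\sc S\rest{W})$ lies in degrees $\le n-1$; combined with $i_x^{*}\sc S\in D^{\le\bar n(k)}\subseteq D^{\le n-2}$ (standard $t$-structure), the attaching triangle $i_x^{!}\sc S\to i_x^{*}\sc S\to R\Gamma(U\setminus\{x\};\sc S)\xrightarrow{+1}$ gives $i_x^{!}\sc S\in D^{\le n}$. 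The remaining checks in (i) are routine.

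For part (ii), the Cappell--Shaneson injectivity, I would show that each restriction map $\Hom_{D(\cU_{k+1})}(\sc A,\sc B)\to\Hom_{D(\cU_k)}(\sc A,\sc B)$ (with $\sc A,\sc B$ restricted throughout) is injective, so that composing over $k=2,\dots,n$ yields the statement. Applying $\Hom(\sc A\rest{\cU_{k+1}},-)$ to the localization triangle $j_{k*}j_k^{!}\sc B\to\sc B\to Ri_{k*}i_k^{*}\sc B\xrightarrow{+1}$ on $\cU_{k+1}$ and using the adjunctions $i_k^{*}\dashv Ri_{k*}$ and $j_k^{*}\dashv j_{k*}$, the kernel of that restriction is a quotient of $\Hom_{D(Y_{n-k})}(j_k^{*}\sc A,\,j_k^{!}\sc B)$; I claim this group vanishes. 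By (RP2)--(RP3) for $\sc A$, $j_k^{*}\sc A\in D^{[0,\bar n(k)]}$. By (RP4) for $\sc B$ together with the stalk identity $\mathcal{H}^{p}(j_k^{!}\sc B)_x\cong\Ht^{p+(n-k)}(i_x^{!}\sc B)$ (valid because $Y_{n-k}$ is an $(n-k)$-manifold), $j_k^{!}\sc B\in D^{\ge\bar m(k)+2}$. Since $\bar n(k)-\bar m(k)\in\{0,1\}$ we have $\bar m(k)+2\ge\bar n(k)+1$, hence $j_k^{!}\sc B\in D^{\ge\bar n(k)+1}$, and $\Hom_{D(Y_{n-k})}\bigl(D^{\le\bar n(k)},D^{\ge\bar n(k)+1}\bigr)=0$ by orthogonality for the standard $t$-structure on $D(Y_{n-k})$.

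I expect the costalk-boundedness lemma in (i) to be the only genuinely non-formal ingredient: the four $[RP]$ axioms do not form a self-dual family --- (RP2) dualizes to a costalk \emph{upper} bound, which is not assumed --- so one is forced to use the local conical structure of distinguished neighbourhoods rather than argue purely categorically. All the rest is Deligne/Borel-style bookkeeping with stalk and costalk vanishing; that part (ii) uses $\bar n(k)\le\bar m(k)+1$ tightly on odd-codimension strata suggests that the $\pm1$ shifts built into axioms (RP3)--(RP4) are exactly what the statement demands.
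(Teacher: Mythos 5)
Your proposal is correct, and it is essentially a worked-out version of what the paper disposes of by citation: the paper proves (i) by appealing to \cite[\S 5.3]{GM2} for the fact that, for constructible complexes satisfying (RP1)--(RP2), Verdier duality interchanges (RP3) and (RP4) since $\bar m$ and $\bar n$ are complementary, and it quotes (ii) from \cite[(1.3)]{CS91}. Your stalk/costalk duality identities together with $\bar m(k)+\bar n(k)=k-2$ are exactly that interchange, and your key observation --- that (RP2) for $(\cD\sc S)[-n]$ is not the dual of any listed axiom but amounts to the costalk bound $\Ht^j(i_x^!\sc S)=0$ for $j>n$ --- is precisely the content hidden in the GM2 citation; your proof of that bound (support--dimension estimates on $\bH^q(\sc S)$ coming from (RP2)--(RP3), fed into the hypercohomology spectral sequence over the link $W$, then the local cohomology triangle $i_x^!\sc S\to i_x^*\sc S\to R\Gamma(U\setminus\{x\};\sc S)$ for a distinguished neighbourhood $U$) is the standard argument and is sound, granted the usual constructibility facts $R\Gamma(U;\sc S)\simeq \sc S_x$ and $R\Gamma(U\setminus\{x\};\sc S)\simeq R\Gamma(W;\sc S\rest{W})$, which the paper would also take for granted. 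Likewise your stratum-by-stratum proof of (ii) --- killing the kernel of each restriction $\Hom_{D(\cU_{k+1})}\to\Hom_{D(\cU_k)}$ via the localization triangle, the adjunctions, $j_k^*\sc A\in D^{[0,\bar n(k)]}$, $j_k^!\sc B\in D^{\geq \bar m(k)+2}\subseteq D^{\geq\bar n(k)+1}$, and orthogonality for the standard $t$-structure --- is the argument behind the Cappell--Shaneson reference, with the purity identification $\bH^p(j_k^!\sc B)_x\cong\Ht^{p+n-k}(i_x^!\sc B)$ justified by constructibility of $j_k^!\sc B$ on the manifold $Y_{n-k}$. What your route buys is self-containedness and a precise accounting of where each axiom is used (in particular the tight use of $\bar n(k)\leq\bar m(k)+1$); what the paper's route buys is brevity.
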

\begin{proof}
These follow directly from the axioms:
For constructible complexes of sheaves satisfying ($RP1$) and ($RP2$), it is well-known \cite[\S5.3]{GM2} that Verdier duality interchanges conditions ($RP3$) and ($RP4$), as $\bar{m}$ and $\bar{n}$ are
complementary perversities.
In \cite[(1.3)]{CS91}, ($ii$) is shown to follow from ($RP3$) and ($RP4$).
\end{proof}

\begin{example}
Let $\hat X=M$ be an $n$-dimensional oriented manifold (without boundary).
Let $\sc S = \bbR_M$ be the constant real sheaf in degree $0$ on $M$.
Then $\sc S$ is an object of $RP (M)$. (Axioms (RP3) and (RP4) are vacuously satisfied.)
If $\cA$ is any local coefficient system (locally constant sheaf) in degree $0$ on an oriented manifold $M$, then
its Verdier dual is given by $\cD \cA = \cA^* [n]$, where $\cA^*$ is the linear dual local system
with stalks $\cA^*_x = \Hom (\cA_x, \bbR),$ $x\in M$. In particular, the Verdier dual of a local
system on a manifold is again a local system, but there is a degree shift.
Applying this to $\sc S$, we obtain the self-duality isomorphism
\[ (\cD \sc S)[-n] = \bbR^*_M \cong \bbR_M = \sc S, \]
where the isomorphism $\bbR^*_M \cong \bbR_M$ is given by the canonical
multiplication $\bbR_M \otimes \bbR_M \to \bbR_M$.
This shows in particular that $\cD \sc S [-n]$ is again an object of $RP(M)$.
\end{example}

Let us recall Deligne's construction of the sheaf complexes $\IC p$ in $D(\hat X).$
For any perversity $\bar p$ on $\hat X$ and $k= 2,\ldots, n$ define
\begin{equation*}
	E^{\bar p}_k:D(\cU_k) \lra D(\cU_{k+1}), \quad
	E^{\bar p}_k \sc A = \tau_{\leq \bar p(k)}Ri_{k*}\sc A,
\end{equation*}
and then set
\begin{equation*}
	\IC p = E^{\bar p}_n\cdots E^{\bar p}_2 \bbR_{\cU_2}.
\end{equation*}
The sheaf complexes $\IC m$ and $\IC n$ both satisfy the axioms [RP], and we will now show that any sheaf that satisfies [RP] is closely related to these sheaves.

First, on the regular part $\cU_2$ of $\hat X,$ any sheaf $\sc S \in RP(\hat X)$ has natural maps
\begin{equation}\label{eq:TrivialFirstIso}
	\IC m \rest{\cU_2} \lra 
	\sc S \rest{\cU_2} \lra
	\IC n \rest{\cU_2}
\end{equation}
since by (RP1) each of these complexes has an isomorphism to the constant sheaf $\bbR_{\cU_2}.$

\begin{proposition}[{cf. \cite[\S 2.2]{BanaglShort}}] \label{prop:UniquePair}
For any $\sc S \in RP(\hat X)$ there is a unique pair of morphisms
\begin{equation*}
	\IC m \xlra\alpha \sc S \xlra\beta \IC n
\end{equation*}
extending \eqref{eq:TrivialFirstIso}. \\

Moreover, on $\cU_{k+1} = \hat X \setminus X_{n-k-1},$ these morphisms factor
\begin{equation}\label{eq:FactorAlphaBeta}
 	\xymatrix{ 
	i_{k+1}^*\IC m \ar[rr]^{\alpha\rest{\cU_{k+1}}} \ar[rd]_{a'} & &
	i_{k+1}^*\sc S \ar[rr]^{\beta\rest{\cU_{k+1}}} \ar[rd]_{b} & &
	i_{k+1}^*\IC n \\
	& E^{\bar m}_k i_k^* \sc S \ar[ru]_a &
	& E^{\bar n}_k i_k^* \sc S \ar[ru]_{b'} &}
\end{equation}
through two standard extensions $E^{\bar m}_k i_k^* \sc S$ and $E^{\bar n}_k i_k^* \sc S$ of $i_k^* \sc S.$
\label{Prop:ExistUniqueMap}\end{proposition}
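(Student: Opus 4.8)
The plan is to construct $\alpha$ and $\beta$, together with the factorizations in \eqref{eq:FactorAlphaBeta}, by induction along $\cU_2 \subset \cU_3 \subset \cdots \subset \cU_{n+1} = \hat X$, extending the prescribed maps \eqref{eq:TrivialFirstIso} over one stratum at a time. The two morphisms are not coupled, so this is two parallel inductions. Over $\cU_2$ there is nothing to prove: $\IC m\rest{\cU_2}$, $\sc S\rest{\cU_2}$ and $\IC n\rest{\cU_2}$ are all identified with $\bbR_{\cU_2}$ by (RP1) and Deligne's construction, and \eqref{eq:TrivialFirstIso} is the given initial datum. The inductive step passes from $\cU_k$ to $\cU_{k+1} = \cU_k \cup Y_{n-k}$ and uses the Deligne identities $i_{k+1}^*\IC p = E^{\bar p}_k\, i_k^*\IC p$ together with the attaching-map reformulation \eqref{eq:AttachingAxiom} of (RP4).

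For $\alpha$, assume it has been built over $\cU_k$ as a morphism $i_k^*\IC m \to i_k^*\sc S$ extending \eqref{eq:TrivialFirstIso}. Applying the functor $E^{\bar m}_k = \tau_{\leq \bar m(k)}Ri_{k*}(\cdot)$ and using $E^{\bar m}_k\, i_k^*\IC m = i_{k+1}^*\IC m$ yields $a' := E^{\bar m}_k(\alpha) : i_{k+1}^*\IC m \to E^{\bar m}_k\, i_k^*\sc S$. To close the square one needs a morphism $a : E^{\bar m}_k\, i_k^*\sc S \to i_{k+1}^*\sc S$, and this is exactly where (RP4) enters. The adjunction morphism $\mathrm{att} : i_{k+1}^*\sc S \to Ri_{k*}i_k^*\sc S$ points the wrong way, but $\tau_{\leq \bar m(k)}(\mathrm{att}) : \tau_{\leq \bar m(k)}(i_{k+1}^*\sc S) \to \tau_{\leq \bar m(k)}Ri_{k*}i_k^*\sc S = E^{\bar m}_k\, i_k^*\sc S$ is an isomorphism in $D(\cU_{k+1})$: it is the identity over $\cU_k$, and over $Y_{n-k}$ it is the isomorphism on $\Ht^j$ for $j \leq \bar m(k)$ furnished by (RP4) in the form \eqref{eq:AttachingAxiom}. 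One defines $a$ to be the inverse of this isomorphism followed by the canonical morphism $\tau_{\leq \bar m(k)}(i_{k+1}^*\sc S) \to i_{k+1}^*\sc S$, and sets $\alpha\rest{\cU_{k+1}} := a \circ a'$. Over $\cU_k$ the complexes $i_k^*\IC m$ and $i_k^*\sc S$ have cohomology concentrated in degrees $\leq \bar m(k)$ --- for $\sc S$ this uses (RP3) together with the perversity identity $\bar n(k-1) = \bar m(k)$ --- so all these truncations are transparent there; hence $\alpha\rest{\cU_{k+1}}$ restricts to $\alpha$ over $\cU_k$ and factors as in \eqref{eq:FactorAlphaBeta} by construction.

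Uniqueness of the extension, hence of $\alpha$, comes from the standard attaching/uniqueness lemma underlying the Deligne construction (\cite{GM2}; see also \cite[\S 2.2]{BanaglShort} and \cite{Borel}): if $\sc A$ satisfies $\Ht^\ell(i_x^*\sc A) = 0$ for $x \in Y_{n-k}$, $\ell > s$, and $\sc B$ satisfies $\Ht^\ell(i_x^!\sc B) = 0$ for $x \in Y_{n-k}$, $\ell \leq s + n - k + 1$, then restriction $\Hom_{D(\cU_{k+1})}(\sc A, \sc B) \to \Hom_{D(\cU_k)}(i_k^*\sc A, i_k^*\sc B)$ is bijective; one applies this with $s = \bar m(k)$, $\sc A = i_{k+1}^*\IC m$ (its defining $\bar m$-stalk bound) and $\sc B = i_{k+1}^*\sc S$ (axiom (RP4)). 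The induction for $\beta$ is entirely parallel, and in fact simpler, since only (RP3) is needed: by (RP3), $i_{k+1}^*\sc S$ lies in $D^{\leq \bar n(k)}(\cU_{k+1})$, so the canonical morphism $\tau_{\leq \bar n(k)}(i_{k+1}^*\sc S) \to i_{k+1}^*\sc S$ is an isomorphism, and composing its inverse with $\tau_{\leq \bar n(k)}(\mathrm{att}) : \tau_{\leq \bar n(k)}(i_{k+1}^*\sc S) \to E^{\bar n}_k\, i_k^*\sc S$ defines $b : i_{k+1}^*\sc S \to E^{\bar n}_k\, i_k^*\sc S$, while $b' := E^{\bar n}_k(\beta) : E^{\bar n}_k\, i_k^*\sc S \to E^{\bar n}_k\, i_k^*\IC n = i_{k+1}^*\IC n$; one sets $\beta\rest{\cU_{k+1}} := b' \circ b$ and invokes the attaching lemma with $s = \bar n(k)$, source $\sc S$ (axiom (RP3)) and target $\IC n$ (its $\bar n$-costalk condition). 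Alternatively, the statement for $\beta$ follows from that for $\alpha$ applied to $(\cD\sc S)[-n]$, which lies in $RP(\hat X)$ by Proposition~\ref{prop:FirstProp}(i), by Verdier dualizing.

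The one genuinely delicate point is the construction of $a$: there is no a priori morphism from the standard extension $E^{\bar m}_k\, i_k^*\sc S$ into $\sc S$, and it is precisely axiom (RP4), read as the assertion that the attaching morphism is an isomorphism on $\Ht^j$ for $j \leq \bar m(k)$, that makes $\tau_{\leq \bar m(k)}$ of the adjunction morphism invertible. Everything else is bookkeeping: the Deligne identities for $\IC m$ and $\IC n$, the perversity identity $\bar n(k-1) = \bar m(k)$ that keeps the truncations harmless over $\cU_k$, the verification that $a \circ a'$ and $b' \circ b$ restrict correctly to $\alpha$ and $\beta$ over $\cU_k$, and a careful citation of the attaching lemma with degree conventions matching the $\bar m(k)+n-k+1$ bound appearing in (RP4).
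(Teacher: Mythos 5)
Your proof is correct and follows essentially the same route as the paper's: extend inductively over $\cU_{k+1}=\cU_k\cup Y_{n-k}$ by applying $E^{\bar m}_k$ and $E^{\bar n}_k$ to the given morphisms and splicing through the truncated adjunction map, which is invertible in degrees $\leq \bar m(k)$ by (RP4) in the form \eqref{eq:AttachingAxiom}, and through the isomorphism $i_{k+1}^*\sc S \cong \tau_{\leq \bar n(k)} i_{k+1}^*\sc S$ from (RP3). The only (harmless) difference is uniqueness, which the paper deduces from Proposition \ref{prop:FirstProp}(ii) (Cappell--Shaneson) rather than from the stepwise attaching lemma you cite; the two are equivalent here.
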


\begin{proof} 
Uniqueness follows from Proposition \ref{prop:FirstProp}($ii$), so we only need to establish existence.

Assume inductively that we have found morphisms
\begin{equation}\label{eq:TrivialInductiveIso}
	i_k^*\IC m \lra
	i_k^*\sc S \lra
	i_k^*\IC n
\end{equation}
extending \eqref{eq:TrivialFirstIso}.
Applying the functors $E^{\bar p}_k$ to \eqref{eq:TrivialInductiveIso} we obtain morphisms
\begin{equation*}
	a': i_{k+1}^*\IC m 
	= E^{\bar m}_k( i_k^*\IC m)
	\lra E^{\bar m}_k( i_k^*\sc S  ), \quad
	b':E^{\bar n}_k( i_k^*\sc S ) \lra
	E^{\bar n}_k( i_k^*\IC n ) 
	=i_{k+1}^*\IC n 
\end{equation*}
in $D(\cU_{k+1}).$
Adjunction $i_{k+1}^*\sc S \lra Ri_{k*}i_k^*\sc S$ is the identity on $\cU_k$ and induces maps
\begin{equation*}
	\tau_{\leq \bar p(k)} i_{k+1}^*\sc S
	\lra E^{\bar p}_k( i_k^*\sc S )
	\Mfor \bar p = \bar m \Mor \bar n.
\end{equation*}
The axiom (RP4) (in the form \eqref{eq:AttachingAxiom}) implies that for $\bar m$ this morphism is a quasi-isomorphism, and axiom (RP3) implies that 
\begin{equation*}
	i_{k+1}^*\sc S \cong \tau_{\leq \bar n(k)} i_{k+1}^*\sc S
\end{equation*}
and so all together we have morphisms
\begin{equation*}
	i_{k+1}^*\IC m
	\xlra{a'} 
	E^{\bar m}_k( i_k^*\sc S ) \cong \tau_{\leq \bar m(k)} i_{k+1}^*\sc S
	\xlra a 
	i_{k+1}^*\sc S \cong \tau_{\leq \bar n(k)} i_{k+1}^*\sc S
	\xlra b 
	E^{\bar n}_k( i_k^*\sc S ) \xlra{b'}
	i_{k+1}^*\IC n
\end{equation*}
extending \eqref{eq:TrivialFirstIso} as required.
\end{proof}

Let $\sc C(a),$ $\sc C(b)$ and $\sc C(ba)$ be sheaf complexes completing $a,b$ and $ba$, respectively, to distinguished triangles. Using these triangles we are able to conclude the following:

\begin{proposition}
For any $x \in \cU_{k+1}\setminus \cU_k,$ the cohomology of the complex $\sc C(ba)_x$ is concentrated in degree $\bar n(k),$ and the maps $a$ and $b$ in \eqref{eq:FactorAlphaBeta} induce an injective map
\begin{equation*}
	\bH^{\bar n(k)}(\sc S_x)  \lra \bH^{\bar n(k)}(\sc C(ba)_x).
\end{equation*}
\end{proposition}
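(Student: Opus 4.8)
The whole statement is local at $x$, so fix $x \in \cU_{k+1}\setminus\cU_k = Y_{n-k}$ and abbreviate $M^\bullet := \bH^\bullet(\sc S_x)$ and $L^\bullet := \bH^\bullet(Ri_{k*}i_k^*\sc S)_x$. Since truncation commutes with passing to the stalk at $x$, the stalks at $x$ of $E^{\bar m}_k i_k^*\sc S$ and $E^{\bar n}_k i_k^*\sc S$ have cohomology $L^j$ in degrees $j \le \bar m(k)$, respectively $j \le \bar n(k)$, and $0$ above. By (RP2) and (RP3), $M^j = 0$ for $j<0$ and for $j > \bar n(k)$, and by (RP4) in the form \eqref{eq:AttachingAxiom} the attaching map $M^j \to L^j$ is an isomorphism for $j \le \bar m(k)$. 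The first thing I would do is squeeze out of (RP4) slightly more than the stated equivalence gives: feeding the full range $\Ht^\ell(i_x^!\sc S) = 0$ for $\ell \le \bar m(k)+n-k+1$ through the attaching triangle relating $i_x^!\sc S$, $i_x^*\sc S$ and $(Ri_{k*}i_k^*\sc S)_x$ (using the usual shift identifying $i_x^!\sc S$ with the costalk of $\sc S$ along the $(n-k)$-dimensional stratum $Y_{n-k}$ at $x$), one finds that $M^j \to L^j$ is not merely an isomorphism for $j \le \bar m(k)$ but is, in addition, \emph{injective} in degree $j = \bar m(k)+1$. Since $\bar n(k) \in \{\bar m(k),\, \bar m(k)+1\}$, this controls the attaching map in every degree $\le \bar n(k)$.

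Next I would read off $ba$ on cohomology at $x$. From the factorization \eqref{eq:FactorAlphaBeta} and the construction in the proof of Proposition~\ref{Prop:ExistUniqueMap}, at the stalk $a$ is the truncation inclusion $\tau_{\le \bar m(k)}\sc S_x \hookrightarrow \sc S_x$ preceded by the (RP4)-isomorphism $(E^{\bar m}_k i_k^*\sc S)_x \cong \tau_{\le \bar m(k)}\sc S_x$, while $b$ is $\tau_{\le \bar n(k)}$ of the attaching map $\sc S_x \to (Ri_{k*}i_k^*\sc S)_x$ together with the (RP3)-isomorphism $\sc S_x \cong \tau_{\le \bar n(k)}\sc S_x$. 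Hence $\Ht^j(ba)$ is an isomorphism $L^j \to L^j$ for $j \le \bar m(k)$, and in the at most one remaining relevant degree $j = \bar n(k) > \bar m(k)$ it is the zero map $0 \to L^{\bar n(k)}$. Running the long exact sequence of the triangle completing $ba$ at $x$ now shows that every cohomology group of $\sc C(ba)_x$ in a degree $< \bar n(k)$ is trapped between two isomorphisms and vanishes, that in degrees $> \bar n(k)$ both source and target vanish, and that $\bH^{\bar n(k)}(\sc C(ba)_x) \cong \coker \Ht^{\bar n(k)}(ba) = L^{\bar n(k)}$. This is the first assertion. (When $\bar n(k) = \bar m(k)$, i.e.\ at even codimension, $ba$ is a stalkwise quasi-isomorphism and $\sc C(ba)_x$ is acyclic; so assume $\bar n(k) = \bar m(k)+1$ from now on.)

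For the induced map I would use the morphism of distinguished triangles built from the commuting square with top row $E^{\bar m}_k i_k^*\sc S \xrightarrow{a} i_{k+1}^*\sc S$, bottom row $E^{\bar m}_k i_k^*\sc S \xrightarrow{ba} E^{\bar n}_k i_k^*\sc S$, and vertical maps $\id$ and $b$: completing it produces a map $c\colon \sc C(a) \to \sc C(ba)$, and the composite $i_{k+1}^*\sc S \to \sc C(a) \xrightarrow{c} \sc C(ba)$ is the morphism whose stalk at $x$ is the one in the statement. On $\bH^{\bar n(k)}$ at $x$, the long exact sequences of the triangles of $a$ and of $ba$ give isomorphisms $\bH^{\bar n(k)}(\sc S_x) \xrightarrow{\ \cong\ } \bH^{\bar n(k)}(\sc C(a)_x)$ and $L^{\bar n(k)} = \bH^{\bar n(k)}(Ri_{k*}i_k^*\sc S)_x \xrightarrow{\ \cong\ } \bH^{\bar n(k)}(\sc C(ba)_x)$ (the common source $E^{\bar m}_k i_k^*\sc S$ has vanishing cohomology at $x$ in degrees $\ge \bar n(k)$), and commutativity of the square identifies the induced map $\bH^{\bar n(k)}(\sc S_x) \to \bH^{\bar n(k)}(\sc C(ba)_x)$ with the attaching map $M^{\bar n(k)} \to L^{\bar n(k)}$. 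By the sharpened form of (RP4) from the first paragraph this attaching map is injective, which is the second assertion.

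The step I expect to be the crux is exactly that sharpening of (RP4): the equivalence recorded just before the proposition only makes the attaching maps isomorphisms through degree $\bar m(k)$, whereas the $\bar m$-costalk vanishing is genuinely one degree stronger, and it is precisely the resulting injectivity in degree $\bar m(k)+1 = \bar n(k)$ that upgrades the canonical map $\bH^{\bar n(k)}(\sc S_x) \to \bH^{\bar n(k)}(\sc C(ba)_x)$ from merely well-defined to injective. Everything downstream is bookkeeping with the long exact sequences of the cones of $a$, $b$ and $ba$; the only real care needed is in tracking which of $\bar m$, $\bar n$ governs which truncation at each stage.
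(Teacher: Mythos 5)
Your proposal is correct, but it reaches the key injectivity by a genuinely different route than the paper. The paper first identifies $\sc C(a) \cong j_{k*}\cA[-\bar n(k)]$ with $\cA$ locally constant on $Y_{n-k}$, then uses Verdier duality (Proposition \ref{prop:FirstProp}, both parts, together with the fact that every object of $RP(\hat X)$ is a shifted dual) to show that $\sc C(b)$ is also concentrated in degree $\bar n(k)$, and finally collapses the octahedral diagram \eqref{eq:FunOctahedron} into two short exact sequences; it is the duality-derived vanishing of $\bH^{\bar m(k)}(\sc C(b)_x)$ that forces $\bH^{\bar n(k)}(\sc S_x) \to \bH^{\bar n(k)}(E^{\bar n}_k i_k^*\sc S)_x$ to be injective. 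You instead extract that injectivity directly from (RP4): since $j_k^!\sc S$ is cohomologically locally constant on the $(n-k)$-dimensional stratum, one has $i_x^!\sc S \cong (j_k^!\sc S)_x[k-n]$, and feeding the full costalk vanishing through degree $\bar m(k)+n-k+1$ into the triangle $j_{k*}j_k^!\sc S \to \sc S \to Ri_{k*}i_k^*\sc S$ yields not only the isomorphisms recorded in \eqref{eq:AttachingAxiom} but also injectivity of the attaching map in degree $\bar m(k)+1=\bar n(k)$ for odd $k$; the rest is long-exact-sequence bookkeeping, and your identification of the map in the statement with that attaching map is forced because the two horizontal maps in your commutative square are isomorphisms, so any cone map $c$ completing the ladder induces the same map on $\bH^{\bar n(k)}$. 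Your route is more elementary and avoids duality altogether (so it needs no orientability input beyond what defines $a$ and $b$), and it makes explicit that (RP4) is strictly stronger than the attaching-iso paraphrase given before the proposition---the extra monomorphism in degree $\bar m(k)+1$ is exactly the crux. The paper's route, while heavier, produces structural by-products used afterwards: the explicit description of $\sc C(a)$ as a shifted locally constant sheaf on the stratum, the concentration of $\sc C(b)$, and the short exact sequence exhibiting $\bH^{\bar n(k)}(\sc C(ba)_x)$ as an extension, which feed the interpretation of $\sc S$ as determined by the subsheaf $\bW_{\bS}(Y_{n-k})$. Like the paper, you establish the injectivity only for odd $k$ (for even $k$ you correctly note $\sc C(ba)_x$ is acyclic, so the injectivity assertion is outside the intended scope there), so the two arguments cover the same cases.
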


\begin{proof}
Let us analyze $a$ and $b$ for odd $k$. The distinguished triangles
\begin{equation*}
\begin{gathered}
	\xymatrix{ 
	E^{\bar m}_k i_k^*\sc S \ar[rr]^a & & i_{k+1}^*\sc S \ar[ld] \\
	& \sc C(a) \ar[lu]^{+1} & }, \quad
	\xymatrix{ 
	i_{k+1}^*\sc S \ar[rr]^b & & E^{\bar n}_k i_{k}^*\sc S \ar[ld] \\
	& \sc C(b) \ar[lu]^{+1} & }, \\
	\xymatrix{ 
	E^{\bar m}_k i_k^*\sc S \ar[rr]^{ba} & & E^{\bar n}_k i_{k}^*\sc S \ar[ld] \\
	& \sc C(ba) \ar[lu]^{+1} & }
\end{gathered}
\end{equation*}
fit together into a diagram
\begin{equation*}
\begin{gathered}
	\phantom{xxx}\\
	\xymatrix{
	E^{\bar m}_k i_k^*\sc S  \ar@/^25pt/[rrrr]^{ba} \ar[rr]^{a} & & i_{k+1}^*\sc S \ar[ld] \ar[rr]^{b}
		& & E^{\bar n}_k i_{k}^*\sc S  \ar@/^30pt/[ddll] \ar[ld] \\
	& \sc C(a) \ar[lu]^{+1}  & & \sc C(b)  \ar[lu]^{+1} & \\
	& & \sc C(ba)  \ar@/^30pt/[lluu]^{+1} & & }
\end{gathered}
\end{equation*}
which by the `octahedral axiom' we can complete to the octahedral diagram
\begin{equation}\label{eq:FunOctahedron}
\begin{gathered}
	\phantom{xxx}\\
	\xymatrix{
	E^{\bar m}_k i_k^*\sc S  \ar@/^25pt/[rrrr]^{ba} \ar[rr]^{a} & & i_{k+1}^*\sc S \ar[ld] \ar[rr]^{b}
		& & E^{\bar n}_k i_{k}^*\sc S  \ar@/^30pt/[ddll] \ar[ld] \\
	& \sc C(a) \ar[lu]^{+1} \ar[rd] & & \sc C(b) \ar[ll]^{+1} \ar[lu]^{+1} & \\
	& & \sc C(ba) \ar[ru] \ar@/^30pt/[lluu]^{+1} & & }
\end{gathered}
\end{equation}
Taking the stalks of the cohomology sheaves at a point $x \in \cU_{k+1}\setminus \cU_k,$ we see that
\begin{equation*}
\begin{gathered}
	\phantom{xxx}\\
	\xymatrix{
	\bH^j( E^{\bar m}_k i_k^*\sc S )_x  \ar@/^25pt/[rrrr]^{\bH^j(ba)} \ar[rr]^{\bH^j(a)} 
		& & \bH^j(\sc S_x) \ar[ld] \ar[rr]^{\bH^j(b)}
		& & \bH^j( E^{\bar n}_k i_k^*\sc S )_x  \ar@/^30pt/[ddll] \ar[ld] \\
	& \bH^j(\sc C(a)_x) \ar@{-->}[lu]^{+1} \ar[rd] & & 
		\bH^j(\sc C(b)_x) \ar@{-->}[ll]^{+1} \ar@{-->}[lu]^{+1} & \\
	& & \bH^j(\sc C(ba)_x) \ar[ru] \ar@/^30pt/@{-->}[lluu]^{+1} & & }
\end{gathered}
\end{equation*}
where we have used dotted arrows for maps into the corresponding $\bH^{j+1}.$
From \eqref{eq:AttachingAxiom}
we know that the map $\bH^j(b)$ is an isomorphism for $j \leq \bar m(k),$
and by construction $\bH^j(ba)$ is the identity map for these $j,$ hence $\bH^j(a)$ is also an isomorphism and
\begin{equation*}
\begin{gathered}
	\bH^j(\sc C(ba)_x) = \bH^j(\sc C(a)_x) =0 \Mforall j \leq \bar m(k), \\
	 \bH^j(\sc C(b)_x)=0 \Mforall j < \bar m(k).
\end{gathered}
\end{equation*}
If $j>\bar n(k)$, then $\bH^j( E^{\bar m}_k i_k^*\sc S )_x$ and $ \bH^j(\sc S_x)$
both vanish and thus $\bH^j(\sc C(a)_x)=0$ for these $j$. We conclude that $\sc C(a)$
is concentrated in a single degree, namely $\bar n(k)$. Moreover, if $x\in \cU_k$, then
$\bH^j(\sc C(a)_x)=0$ for \emph{all} $j$, since $i^*_k a$ is the identity map. So $\sc C(a)$ is supported over the manifold $\cU_{k+1} \setminus \cU_k$. 
Summarizing, $\sc C(a)$ has the form
\[ \sc C(a) = j_{k*} \cA [-\bar n(k)], \]
where $\cA$ is a locally constant sheaf on the manifold $M=Y_{n-k}$.
Note that as $j_k$ is a closed inclusion, the functor $j_{k*}$ is just extension by zero and
we have $j_{k*} = j_{k!}$.

We shall now show that $\sc C(b)$ is also concentrated in the degree $\bar n(k)$.
We prove this following \cite[\S 2.4]{BanaglShort}.
As pointed out in Proposition \ref{prop:FirstProp}($i$), the Verdier dual $\cD(\sc S)[-n]$ is also in $RP(\hat X).$ Thus we have a distinguished triangle
\begin{equation*}
	\xymatrix{ 
	i_{k+1}^*\cD(\sc S)[-n] \ar[rr]^{b'} & & E^{\bar n}_k i_{k}^*\cD(\sc S)[-n] \ar[ld] \\
	& \sc C(b') \ar[lu]^{+1} & }.
\end{equation*}
which dualizes to
\begin{equation*}
	\xymatrix{ 
	E^{\bar m}_k i_k^*\sc S \ar[rr]^{\cD(b')[-n]} & & i_{k+1}^*\sc S \ar[ld] \\
	& \cD(\sc C(b'))[-n+1] \ar[lu]^{+1} & }.
\end{equation*}
By Proposition \ref{prop:FirstProp}($ii$), we know that $\cD(b')[-n] = a$ and so we have an isomorphism of triangles
\begin{equation*}
	\xymatrix{
	E^{\bar m}_k i_k^*\sc S \ar[r]^{\cD(b')[-n]} \ar[d]^=&  i_{k+1}^*\sc S \ar[r] \ar[d]^= & 
    \cD(\sc C(b'))[-n+1] \ar[r]^-{+1} \ar@{-->}[d]^{\cong} & \ldots \\
	E^{\bar m}_k i_k^*\sc S \ar[r]^{a} &  i_{k+1}^*\sc S \ar[r]& \sc C(a) \ar[r]^-{+1} & \ldots }
\end{equation*}
which shows that $\cD(\sc C(b'))[-n+1] \cong \sc C(a).$ 
In the following calculation, we distinguish in our notation between the Verdier dualizing
functor $\cD_{\hat X} =\cD$ on $\hat X$ and the dualizing functor $\cD_M$ on the manifold
$M = Y_{n-k}$. We have
\begin{eqnarray*}
\sc C(b') & \cong &
 (\cD_{\hat X} \sc C(a))[-n+1] \\
& \cong &  (\cD_{\hat X} ( j_{k*} \cA [-\bar n(k)]))[-n+1] \\
& \cong &  (\cD_{\hat X} ( j_{k*} \cA))[\bar n(k)-n+1] \\
& \cong &  j_{k!} (\cD_{M} \cA)[\bar n(k)-n+1] \\
& \cong &  j_{k*} \cA^* [n-k][\bar n(k)-n+1] \\
& \cong &  j_{k*} \cA^* [\bar n(k)-k+1] \\
& \cong &  j_{k*} \cA^* [-\bar n(k)]. \\
\end{eqnarray*}
(Recall that for odd $k$, $\bar n(k) = (k-1)/2$.)
This shows that $\sc C(b')$ is concentrated in degree $\bar n(k)$.
Since every object of $RP(\hat X)$ is of the form $\cD \sc S [-n]$ for some
$\sc S \in \operatorname{Ob}RP(\hat X),$ this also shows that $\sc C(b)$ is concentrated
in degree $\bar n(k)$.

If $k$ is odd, then $\bar n(k) = \bar m(k)+1$ and, using the information above, the diagram for $\bar n(k)$ collapses to a pair of short exact sequences
\begin{equation*}
	\xymatrix{
	0 \ar[r] & \bH^{\bar n(k)}(\sc S_x) \ar[d]^-{\cong} \ar[r]^-{\bH^{\bar n(k)}(b)} &
	\bH^{\bar n(k)}(E^{\bar n}_ki_k^*\sc S)_x \ar[d]^-{\cong} \ar[r] &
	\bH^{\bar n(k)}( \sc C(b)_x) \ar[d]^-{=} \ar[r] & 0 \\
	0 \ar[r] & \bH^{\bar n(k)}(\sc C(a)_x) \ar[r] &
	\bH^{\bar n(k)}(\sc C(ba)_x)  \ar[r] &
	\bH^{\bar n(k)}( \sc C(b)_x) \ar[r] & 0 }
\end{equation*}
and so we have in particular an injective map $\bH^{\bar n(k)}(\sc S_x) \cong \bH^{\bar n(k)}(\sc C(a)_x) \lra \bH^{\bar n(k)}(\sc C(ba)_x).$

\end{proof}

One interpretation of this proposition is that $i_{k+1}^*\sc S$ differs from the two canonical extensions of $i_k^*\sc S$ to $\cU_{k+1},$ namely $E^{\bar m}_k i_k^* \sc S$ and $E^{\bar n}_k i_k^* \sc S,$ by the choice of a subsheaf $\bH^{\bar n(k)}(\sc S_x)$ of $\bH^{\bar n(k)}(\sc C(ba)_x).$
Indeed, it is possible to assemble $i^*_k\sc S$ and the sheaf
\begin{equation}\label{eq:bWYk}
	\bW_{\bS}(Y_{n-k}) \in \Sh(Y_{n-k}), \quad
	\bW_{\bS}(Y_{n-k}) = \bH^{\bar n(k)}(\sc S)\rest{Y_{n-k}}. 
\end{equation}
to an object over $\cU_{k+1}$ satisfying $[RP],$ quasi-isomorphic to $\sc S|_{\cU_{k+1}}.$

To carry out this assembly, let us recall the 
 modified truncation functor from \cite[\S 5]{BanaglShort}:
Given a constructible complex of sheaves $\sc A$ on a stratified pseudomanifold $M,$ and an injective sheaf map
\begin{equation*}
	\phi: \bE \lra \bH^p(\sc A),
\end{equation*}
we use the quotient map
\begin{equation*}
	\pi: \text{ {\bf ker} } d^p \lra \bH^p(\sc A)
\end{equation*}
to define a new (constructible) complex of sheaves
\begin{equation*}
	\tau_{\leq p}(\sc A, \bE) \in \Sh^\bullet(M), \quad
	\lrpar{ \tau_{\leq p}(\sc A, \bE) }^j
	= \begin{cases}
	\bA^j & \Mif j < p \\
	\pi^{-1}(\phi(\bE)) & \Mif j=p \\
	0 & \Mif j>p
	\end{cases} 
\end{equation*}
Notice that this new complex satisfies
\begin{equation}\label{eq:CohoTau}
	\bH^j ( \tau_{\leq p}(\sc A, \bE) )
	= \begin{cases}
	\bH^j(\sc A) & \Mif j < p \\
	\phi(\bE) & \Mif j=p \\
	0 & \Mif j>p
	\end{cases} 	
\end{equation}

It is an important fact that this modified truncation defines a functor.
Let $\Sh^\bullet(M)$ denote the category of complexes of sheaves on $M,$ and $MS(M)$ the associated category of morphisms.
Let $\Sh^{\bullet}(M) \rtimes MS(M)$ denote the twisted product category whose objects are pairs
\begin{equation*}
	(\sc A, \bE \xlra\phi \bH^p(\sc A) )
\end{equation*}
with $\phi$ injective, and whose morphisms are pairs
\begin{equation*}
	(f,h) \in \Hom \lrpar{
	(\sc A, \bE \xlra\phi \bH^p(\sc A) ),
	(\sc B, \bF \xlra\psi \bH^p(\sc B) ) }
\end{equation*}
with $f: \sc A \lra \sc B$ a sheaf complex morphism and $h: \bE \lra \bF$ a sheaf morphism such that
\begin{equation*}
	\xymatrix{
	\bE \ar[r]^-{\phi} \ar[d]^-h & \bH^p(\sc A) \ar[d]^-{\bH^p(f)} \\
	\bF \ar[r]^-{\psi} & \bH^p(\sc B) }
\end{equation*}
commutes.
In a similar fashion, we define $D(M) \rtimes MS(M)$ starting from the derived category of bounded constructible sheaf complexes on $M.$ We know, from \cite[Theorem 2.5]{BanaglShort}, that modified truncation defines a covariant functor
\begin{equation*}
	\tau_{\leq p}(\cdot, \cdot) : 
	\Sh^{\bullet}(M) \rtimes MS(M) \lra \Sh^{\bullet}(M)
\end{equation*}
which descends to the derived category
\begin{equation*}
	\tau_{\leq p}(\cdot, \cdot) : 
	D(M) \rtimes MS(M) \lra D(M).
\end{equation*}
We can now carry out the assembly referred to above.

\begin{proposition}[{cf. \cite[Lemma 2.4]{Borel}}] \label{prop:SIsItsExtension}
Let $\sc S \in D(\cU_{k+1})$ satisfy the axioms $[RP]$ on $\cU_{k+1},$ let $\bW_{\bS}(Y_{n-k})$ be the sheaf from \eqref{eq:bWYk}, and let
\begin{equation*}
	Ei_k^*\sc S = 
	\begin{cases}
	\tau_{\leq \bar n(k)} Ri_{k*}i_k^*\sc S & \Mif k \even \\
	\tau_{\leq \bar n(k)} (Ri_{k*}i_k^*\sc S, \bW_{\bS}(Y_{n-k})) & \Mif k \odd \\
	\end{cases}
\end{equation*}
then $\sc S \cong Ei_k^*\sc S.$
\end{proposition}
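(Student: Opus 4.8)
The plan is to verify that the natural map $\sc S \to Ei_k^*\sc S$ is a quasi-isomorphism by checking it stratum by stratum, using the structural information gathered in the previous proposition. First I would observe that on $\cU_k$ the map is tautologically the identity, since $\tau_{\leq \bar n(k)}$ and the modified truncation do not alter the complex away from $Y_{n-k}$ and $i_k^*$ is the identity there; so the content is entirely at points $x \in Y_{n-k} = \cU_{k+1}\setminus \cU_k$. At such a point I would use the standard computation $\bH^j(Ri_{k*}i_k^*\sc S)_x = \bH^j(R i_{k*}i_k^*\sc S)_x$ (the attaching map to the link cohomology), and recall from axiom (RP3) that $\bH^j(\sc S_x) = 0$ for $j > \bar n(k)$, so that the truncation at level $\bar n(k)$ does not destroy any stalk cohomology of $\sc S$ itself. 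The map $\sc S|_{\cU_{k+1}} \to Ei_k^*\sc S$ is induced by the adjunction $\sc S \to Ri_{k*}i_k^*\sc S$ composed with truncation; for it to land in the modified truncation in the odd case one uses that the image of $\bH^{\bar n(k)}(\sc S_x)$ inside $\bH^{\bar n(k)}(Ri_{k*}i_k^*\sc S)_x$ is precisely $\bW_{\bS}(Y_{n-k})_x$ by definition \eqref{eq:bWYk}, and the functoriality of $\tau_{\leq p}(\cdot,\cdot)$ established via \cite[Theorem 2.5]{BanaglShort}.

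For the even case, $k$ even forces $\bar n(k) = \bar m(k)$, and axiom (RP4) in the form \eqref{eq:AttachingAxiom} says the attaching map $\bH^j(\sc S_x) \to \bH^j(Ri_{k*}i_k^*\sc S)_x$ is an isomorphism for $j \le \bar m(k) = \bar n(k)$; combined with (RP3) killing everything above $\bar n(k)$ on the $\sc S$ side, this shows $\sc S_x \to (\tau_{\leq \bar n(k)}Ri_{k*}i_k^*\sc S)_x$ is a quasi-isomorphism on every stalk, hence $\sc S|_{\cU_{k+1}} \cong \tau_{\leq \bar n(k)}Ri_{k*}i_k^*\sc S = Ei_k^*\sc S$. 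This is essentially the classical Deligne argument, as the reference to \cite[Lemma 2.4]{Borel} suggests.

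The odd case is where the modified truncation is genuinely needed. Here $\bar n(k) = \bar m(k) + 1$, so (RP4)/\eqref{eq:AttachingAxiom} gives that the attaching map is an isomorphism only for $j \le \bar m(k) = \bar n(k) - 1$, and in degree $\bar n(k)$ the attaching map $\bH^{\bar n(k)}(\sc S_x) \to \bH^{\bar n(k)}(Ri_{k*}i_k^*\sc S)_x$ need only be injective. I would argue: in degrees $j < \bar n(k)$ the map $\sc S_x \to (Ei_k^*\sc S)_x$ is an isomorphism on cohomology because the attaching map already is and neither truncation changes degrees below $\bar n(k)$; in degrees $j > \bar n(k)$ both sides vanish (the left by (RP3), the right because the modified truncation is zero above level $p$, cf. \eqref{eq:CohoTau}); and in degree exactly $\bar n(k)$ the modified truncation is built so that $\bH^{\bar n(k)}(\tau_{\leq \bar n(k)}(Ri_{k*}i_k^*\sc S, \bW_{\bS}(Y_{n-k})))_x = \phi(\bW_{\bS}(Y_{n-k})_x)$, which by \eqref{eq:bWYk} is exactly the image of $\bH^{\bar n(k)}(\sc S_x)$ under the (injective) attaching map, so the induced map $\bH^{\bar n(k)}(\sc S_x) \to \bH^{\bar n(k)}(Ei_k^*\sc S)_x$ is an isomorphism. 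One then needs to know these stalkwise quasi-isomorphisms are induced by an actual morphism in $D(\cU_{k+1})$ — this is supplied by the functoriality of modified truncation together with the pair $(\text{adjunction}, \bW_{\bS}(Y_{n-k}) \hookrightarrow \bH^{\bar n(k)}(Ri_{k*}i_k^*\sc S))$ being a legitimate morphism in $D(\cU_{k+1}) \rtimes MS(\cU_{k+1})$.

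I expect the main obstacle to be the bookkeeping in the odd case showing that the canonical map $\sc S|_{\cU_{k+1}} \to Ei_k^*\sc S$ is well-defined at the level of the derived category — i.e., that $\bW_{\bS}(Y_{n-k})$ with its inclusion into $\bH^{\bar n(k)}(Ri_{k*}i_k^*\sc S)|_{Y_{n-k}}$ genuinely defines an object of the twisted category to which $\tau_{\leq \bar n(k)}(\cdot,\cdot)$ applies, and that there is a morphism from $(\sc S|_{\cU_{k+1}},\ \text{trivial}) $-type data realizing the comparison. Once the functoriality from \cite{BanaglShort} is correctly invoked this is routine, but the identification of $\bW_{\bS}(Y_{n-k})_x$ with the image of the attaching map (which forces the degree-$\bar n(k)$ cohomologies to match exactly rather than just inject) is the crux and relies essentially on the previous proposition having concentrated $\sc C(a)$ in degree $\bar n(k)$ and identified it as $j_{k*}\cA[-\bar n(k)]$.
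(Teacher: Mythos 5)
Your proposal is correct and follows essentially the same route as the paper: the comparison map is built from the adjunction $\sc S \to Ri_{k*}i_k^*\sc S$ together with the functoriality of Banagl's modified truncation (the paper factors this through $\tau_{\leq \bar n(k)}(\sc S, \bW_{\bS}(Y_{n-k})) \cong \sc S$, which is an isomorphism by (RP3)), and it is checked to be an isomorphism exactly as you do — degrees $\leq \bar m(k)$ by (RP4) in the attaching-map form, degrees above $\bar n(k)$ by vanishing on both sides, and degree $\bar n(k)$ in the odd case by the identification of $\phi(\bW_{\bS}(Y_{n-k}))$ with the image of $\bH^{\bar n(k)}(\sc S)$ under the (injective) attaching map. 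The only differences are cosmetic (stalkwise verification of the composite versus the paper's chain of two isomorphisms).
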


\begin{proof}
We must distinguish two cases: The case $k$ even and the case $k$ odd.
Suppose first that $k$ is even.
By axiom (RP4), phrased as \eqref{eq:AttachingAxiom}, the adjunction map $\sc S \xlra{f} Ri_{k*}i_k^*\sc S$ induces an isomorphism
\[ \tau_{\leq \bar n(k)} \sc S \xlra{f}  \tau_{\leq \bar n(k)} Ri_{k*}i_k^*\sc S, \]
since we have $\bar n(k) = \bar m(k)$ when $k$ is even.
The canonical map $\tau_{\leq \bar n(k)} \sc S \to \sc S$ is an isomorphism by axiom (RP3).
Putting these two isomorphism together, we obtain an isomorphism
$\sc S \cong  \tau_{\leq \bar n(k)} Ri_{k*}i_k^*\sc S = Ei_k^*\sc S$ as claimed.\\

Suppose that $k$ is odd.
The adjunction map $f$ participates in a commutative diagram
\begin{equation*}
	\xymatrix{
	\bW_{\bS}(Y_{n-k}) \ar[rr]^{\id} \ar[d]^{\id} && \bH^{\bar n(k)}(\sc S) \ar[d]^{\bH^{\bar n(k)}(f)} \\
	\bH^{\bar n(k)}(\sc S) \ar[rr]_-{\bH^{\bar n(k)}(f)} & & \bH^{\bar n(k)}(Ri_{k*}i_k^*\sc S) }
\end{equation*}
which by functoriality of 
$\tau_{\leq \bar n(k)}(\cdot, \cdot)$
induces a map 
\[ \tau_{\leq \bar n(k)}(\sc S, \bW_{\bS}(Y_{n-k})) \longrightarrow Ei_k^*\sc S. \]
This map
is an isomorphism: In degrees $\leq \bar m(k)$ it is a cohomology isomorphism by ($RP4$) in the form \eqref{eq:AttachingAxiom}; in degree $\bar n(k)$ it induces the map
\[ \bH^{\bar n(k)}(f):  \bH^{\bar n(k)}(\sc S) \to \bH^{\bar n(k)}(f) (\bH^{\bar n(k)}(\sc S)), \]
also an isomorphism. The canonical map
\begin{equation*}
	\tau_{\leq \bar n(k)}(\sc S,\bW_{\bS}(Y_{n-k})) =  \tau_{\leq \bar n(k)} \sc S \lra \sc S 
\end{equation*}
is an isomorphism by ($RP3$).
Consequently,
\[ \sc S \cong \tau_{\leq \bar n(k)}(\sc S,\bW_{\bS}(Y_{n-k})) \cong Ei_k^*\sc S, \]
as required.
\end{proof}

\section{Mezzoperversities}

We start by describing the analogue of Deligne's construction in the category [RP].
Recall from \cite{GM2} that given a perversity, one can inductively construct the intersection sheaf complex in $D(\cU_k)$ for each $k,$ obtaining finally an object in $D(\hat X).$
To construct an object in [RP] one needs more information than just a classical perversity. We examine the analogous procedure for constructing an object in [RP] inductively over $\cU_k.$

All sheaves in [RP] are isomorphic to the constant sheaf $\cP_2 = \bbR$ over $\cU_2$ and to $\cP_3 = \tau_{\leq \bar n(2)}Ri_{2*}\bbR$ over $\cU_3.$
If $\bW(Y_{n-3})$ is a subsheaf of $\bH^{\bar n(3)}(Ri_{3*}\cP_3)$ over $Y_{n-3}$ then let us set
\begin{equation*}
	\cL_4 = \{ \bW(Y_{n-3}) \}, \quad
	\cP_4(\cL_4) 
	= \tau_{\leq \bar n(3)} (Ri_{3*}\cP_3, \bW(Y_{n-3})) \text{ over } \cU_4.
\end{equation*}
We can continue inductively in this way to construct an element $\cP(\cL)$ of $D(\hat X).$
Indeed, if we have constructed $\cL_k$ and $\cP_k(\cL_k)$ over $\cU_k,$ then we extend our construction to $\cU_{k+1}$ by either
\begin{equation*}
	\cL_{k+1} = \cL_k = \{ \bW(Y_{n-3}), \ldots, \bW(Y_{n-(k-1)}) \}, \quad
	\cP_{k+1}(\cL_{k+1}) = \tau_{\leq \bar n(k)}Ri_{k*} \cP_k(\cL_k)
\end{equation*}
if $k$ is even or, if $k$ is odd, choosing 
\begin{equation*}
	\bW(Y_{n-k}) \text{ a subsheaf of } \bH^{\bar n(k)}(Ri_{k*} \cP_k(\cL_k) )
\end{equation*}
and then setting
\begin{equation*}
\begin{gathered}
	\cL_{k+1} = \cL_k \cup \{ \bW(Y_{n-k}) \} = \{ \bW(Y_{n-3}), \ldots, \bW(Y_{n-k}) \}, \\
	\cP_{k+1}(\cL_{k+1}) = \tau_{\leq \bar n(k)}(Ri_{k*} \cP_k(\cL_k), \bW(Y_{n-k}) ).
\end{gathered}
\end{equation*}

\begin{definition}
We refer to the sequence of sheaves
\begin{equation*}
	\bW(Y_{n-k}) \in \Sh(Y_{n-k})
\end{equation*}
together with the injective sheaf maps
\begin{equation*}
	\bW(Y_{n-k}) \lra 
	\bH^{\bar n(k)}(Ri_{k*} \cP_k(\cL_k) )
\end{equation*}
as a (topological) {\bf mezzoperversity} $\cL$ and to the resulting sheaf complex $\cP(\cL)$
as the {\bf Deligne sheaf} associated to the mezzoperversity.
\end{definition}

As anticipated, every $\cP(\cL)$ is an element of $[RP].$
Indeed, it clearly satisfies (RP1)-(RP3), and from \eqref{eq:CohoTau}
we know that the map
\begin{equation*}
	\cP_k (\cL) \lra Ri_{k*}\cP_{k-1}(\cL)
\end{equation*}
is a quasi-isomorphism for degrees $\leq \bar m(k).$ The commutative diagram
\begin{equation*}
	\xymatrix{
	\cP_k (\cL) \ar[rr]^-{\text{adj.}} \ar[rd]_{\text{q. iso }\leq \bar m(k) } & & Ri_{k*}i_k^*\cP_{k}(\cL) = Ri_{k*}\cP_{k-1}(\cL) \ar[ld]^{\cong} \\
	& Ri_{k*}\cP_{k-1}(\cL) & }
\end{equation*}
where we have used that $i_k^*\cP_k (\cL) = \cP_{k-1}(\cL),$
shows that the attaching maps
\begin{equation*}
	\bH^j(\cP_k (\cL)_x) \lra \bH^j (R i_{k*}i_k^* \cP_k(\cL))_x
\end{equation*}
are isomorphisms for all $j \leq \bar m(k).$ We next prove that conversely every element of [RP] is isomorphic to a Deligne sheaf.

\begin{theorem}\label{thm:DeligneSheaf}
If $\sc S$ is an object in $RP(\hat X)$ and $\nu_2: \bbR_{\cU_2} \xlra{\cong} \sc S\rest{\cU_2}$ is the normalization isomorphism from (RP1), then there is a mezzoperversity $\cL$ with Deligne sheaf $\cP(\cL)$ and an isomorphism 
\begin{equation*}
	\nu: \cP(\cL) \lra \sc S
\end{equation*}
extending $\nu_2$ to $\hat X.$
\end{theorem}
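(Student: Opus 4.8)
The plan is to build, by induction on $k$ from $2$ up to $n$, a mezzoperversity datum $\cL_k$ together with an isomorphism $\nu_k \colon \cP_k(\cL_k) \xlra{\cong} \sc S\rest{\cU_k}$ that restricts to $\nu_2$ on $\cU_2$; since $\cU_{n+1} = \hat X$, the final step produces the sought $\cL$ and $\nu$. The engine of the induction is Proposition~\ref{prop:SIsItsExtension}, which identifies $\sc S\rest{\cU_{k+1}}$ with an explicit modified Deligne extension $Ei_k^*\sc S$ of $\sc S\rest{\cU_k} = i_k^*\sc S$. Thus the inductive step reduces to \emph{transporting} this extension across the already built isomorphism $\nu_k$, using functoriality of $Ri_{k*}$ and of $\tau_{\leq \bar n(k)}$ when $k$ is even, and functoriality of $Ri_{k*}$ and of the modified truncation functor $\tau_{\leq \bar n(k)}(\cdot,\cdot)$ of \cite[Theorem~2.5]{BanaglShort} when $k$ is odd.

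\textbf{Base case and even step.} Start with $\cL_2 = \emptyset$, $\cP_2 = \bbR_{\cU_2}$, and $\nu_2$ the given normalization. For $k$ even (including $k = 2$), set $\cL_{k+1} = \cL_k$ and $\cP_{k+1}(\cL_{k+1}) = \tau_{\leq \bar n(k)} Ri_{k*}\cP_k(\cL_k)$; applying the functor $\tau_{\leq \bar n(k)} Ri_{k*}$ to $\nu_k$ gives an isomorphism onto $\tau_{\leq \bar n(k)} Ri_{k*} i_k^* \sc S = Ei_k^*\sc S$, and composing with the isomorphism $Ei_k^*\sc S \cong \sc S\rest{\cU_{k+1}}$ of Proposition~\ref{prop:SIsItsExtension} yields $\nu_{k+1}$, which restricts to $\nu_k$ by construction.

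\textbf{Odd step.} This is the only case needing a genuine choice, and where the bulk of the work lies. Let $\bW_{\bS}(Y_{n-k}) = \bH^{\bar n(k)}(\sc S)\rest{Y_{n-k}}$ as in \eqref{eq:bWYk}. The adjunction $f\colon \sc S \to Ri_{k*}i_k^*\sc S$ over $\cU_{k+1}$ induces on $Y_{n-k}$ a sheaf map $\bH^{\bar n(k)}(f) \colon \bW_{\bS}(Y_{n-k}) \to \bH^{\bar n(k)}(Ri_{k*}i_k^*\sc S)$ which is \emph{injective}: this injectivity is precisely the content of the short exact sequences established, for odd $k$, in the proposition just before Proposition~\ref{prop:SIsItsExtension} (and reused in its proof). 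The isomorphism $\nu_k$ gives $Ri_{k*}\nu_k \colon Ri_{k*}\cP_k(\cL_k) \xlra{\cong} Ri_{k*}i_k^*\sc S$, hence an isomorphism of cohomology sheaves $\bH^{\bar n(k)}(Ri_{k*}\nu_k)$. Let $\bW(Y_{n-k})$ be $\bW_{\bS}(Y_{n-k})$ regarded as a subsheaf of $\bH^{\bar n(k)}(Ri_{k*}\cP_k(\cL_k))$ via $\bH^{\bar n(k)}(Ri_{k*}\nu_k)^{-1} \circ \bH^{\bar n(k)}(f)$ --- still injective --- and set $\cL_{k+1} = \cL_k \cup \{ \bW(Y_{n-k}) \}$, $\cP_{k+1}(\cL_{k+1}) = \tau_{\leq \bar n(k)}(Ri_{k*}\cP_k(\cL_k), \bW(Y_{n-k}))$. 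With this choice, the pair $(Ri_{k*}\nu_k, h)$, where $h \colon \bW(Y_{n-k}) \to \bW_{\bS}(Y_{n-k})$ is the isomorphism induced by $\bH^{\bar n(k)}(Ri_{k*}\nu_k)$, is an isomorphism in the twisted product category $D(\cU_{k+1}) \rtimes MS(\cU_{k+1})$ from $(Ri_{k*}\cP_k(\cL_k), \bW(Y_{n-k}))$ to $(Ri_{k*}i_k^*\sc S, \bW_{\bS}(Y_{n-k}))$ --- the square required of such a morphism commutes by the very definition of $\bW(Y_{n-k})$. Applying the functor $\tau_{\leq \bar n(k)}(\cdot,\cdot)$ produces an isomorphism $\cP_{k+1}(\cL_{k+1}) \xlra{\cong} Ei_k^*\sc S$, which composed with Proposition~\ref{prop:SIsItsExtension} (odd case) gives $\nu_{k+1}$, again restricting to $\nu_k$.

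\textbf{Conclusion and main obstacle.} Running the induction to $k = n$ yields a mezzoperversity $\cL$ and an isomorphism $\nu \colon \cP(\cL) \to \sc S$ restricting to $\nu_2$; that $\cP(\cL) \in RP(\hat X)$ was already checked in the discussion preceding the theorem, and constructibility of each $\bW(Y_{n-k})$ is inherited from that of $\sc S$. The one delicate point is the odd step: one must know that $\bW_{\bS}(Y_{n-k})$ embeds \emph{injectively} into $\bH^{\bar n(k)}(Ri_{k*}i_k^*\sc S)$ --- so that it is legitimate input for the modified truncation --- and that transporting it along $\nu_k$ yields an honest morphism in $D(\cU_{k+1}) \rtimes MS(\cU_{k+1})$, i.e. that the relevant square commutes, so that functoriality of $\tau_{\leq \bar n(k)}(\cdot,\cdot)$ applies. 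Everything else is the formal bookkeeping of applying fixed functors to an inductively supplied isomorphism while tracking the normalization.
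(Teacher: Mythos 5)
Your proposal is correct and follows essentially the same route as the paper: induct over the open sets $\cU_k$, use Proposition \ref{prop:SIsItsExtension} to identify $\sc S\rest{\cU_{k+1}}$ with its (modified) Deligne extension of $i_k^*\sc S$, and at odd $k$ transport $\bW_{\bS}(Y_{n-k})$ along $\bH^{\bar n(k)}(Ri_{k*}\nu_k)^{-1}$ so that the pair of maps is an isomorphism in $D(\cU_{k+1})\rtimes MS(\cU_{k+1})$, to which the functor $\tau_{\leq \bar n(k)}(\cdot,\cdot)$ is applied. The paper writes out only the steps over $\cU_3$ and $\cU_4$ and then says one continues inductively, which is exactly the general step you spell out, including the appeal to the preceding proposition for injectivity of the structure map.
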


\begin{proof}
Over $\cU_2$ there is nothing to show.\\
Over $\cU_3$ we have the isomorphism
\begin{equation*}
	\nu_3 = (\tau_{\leq \bar n(2)}Ri_{2*})(\nu_2): 
	\tau_{\leq \bar n(2)}Ri_{2*}\cP_2 = \cP_3 \lra
	\tau_{\leq \bar n(2)}Ri_{2*} \sc S\rest{\cU_2} \cong \sc S\rest{\cU_3},
\end{equation*}
so the statement holds over $\cU_3.$ 
Over $\cU_4$ we know from Proposition \ref{prop:SIsItsExtension} that
\begin{equation*}
	\sc S\rest{\cU_4} \cong
	\tau_{\leq \bar n(3)} (Ri_{3*}i_3^*\sc S, \bW_{\bS}(Y_{n-3}))
\end{equation*}
where $\bW_{\bS}(Y_{n-3}) = \bH^{\bar n(3)}(\sc S)\rest{Y_{n-3}}.$
Since the map
\begin{equation*}
	\bH^{\bar n(3)}(Ri_{3*}\cP_3)
	\xlra{ \bH^{\bar n(3)}(Ri_{3*} \nu_3) }
	\bH^{\bar n(3)}(Ri_{3*}\sc S\rest{\cU_3})
\end{equation*}
is an isomorphism, we can define an injective sheaf map $\psi$ by the diagram
\begin{equation*}
	\xymatrix{
	\bW_{\bS}(Y_{n-3}) \ar[r]^-{\phi} \ar[d]_{\id} & \bH^{\bar n(3)}(Ri_{3*}\sc S\rest{\cU_3})\\
	\bW_{\bS}(Y_{n-3}) \ar[r]^-{\psi}  & \bH^{\bar n(3)}(Ri_{3*}\cP_3) 
   \ar[u]^{\cong}_{\bH^{\bar n(3)}(Ri_{3*} \nu_3) } }
\end{equation*}
Here, $\phi$ is the map induced on $\bH^{\bar n(3)}$ by the adjunction
$\sc S|_{\cU_4} \to Ri_{3*} i^*_3 \sc S|_{\cU_4}$.
This diagram shows that the pair $(Ri_{3*}\nu_3^{-1}, \id)$ is an isomorphism in the category 
$D(\cU_4) \rtimes MS(\cU_4),$ so after applying the functor $\tau_{\leq \bar n(3)}(\cdot, \cdot)$ we obtain an isomorphism
\begin{equation*}
	\nu_4:
	\tau_{\leq \bar n(3)}(Ri_{3*} \cP_3, \bW_{\bS}(Y_{n-3})) = \cP_4(\cL_4) \lra
	\tau_{\leq \bar n(3)}(Ri_{3*} \sc S\rest{\cU_3}, \bW_{\bS}(Y_{n-3})) \ \cong \sc S\rest{\cU_4},
\end{equation*}
where $\cL_4$ is $\bW_{\bS}(Y_{n-3})$ together with the injective sheaf map $\psi.$
Thus the statement holds over $\cU_4.$
One now continues to proceed as above to show that the statement holds over all of $\hat X.$
\end{proof}

\begin{definition}
We refer to the mezzoperversity $\cL$ in this theorem as {\bf the mezzoperversity of $\sc S$} and denote it  $\cL_{\bS}.$
\end{definition}

\section{Global Duality}

In this section we work with $\hat X$ a compact oriented topologically stratified pseudomanifold and with sheaves of $\bbR$-vector spaces.

In Proposition \ref{prop:FirstProp}($i$), we pointed out that if 
$\sc S \in D(\hat X)$ satisfies axioms [RP], and $\cD\sc S$ is its Verdier dual, then $(\cD\sc S) [-n]$ satisfies the axioms [RP] as well.
If $\cL$ is the mezzoperversity of $\sc S$ we refer to the mezzoperversity of $(\cD\sc S)[-n]$ as the {\bf dual mezzoperversity} of $\cL$ and denote it
$\sD \cL.$

Global duality for refined intersection homology groups is an immediate consequence upon taking hypercohomology.
Indeed, for any $\sc S \in D(\hat X),$ we have (see, e.g., \cite[\S 1.12]{GM2}, \cite[\S 4.4]{BanaglLong})
\begin{equation*}
	\bbH^j(\hat X; \sc S) 
	\cong 
	\Hom(\bbH^{n-j}(\hat X; \cD\sc S[-n]), \bbR)
\end{equation*}
and hence a non-degenerate bilinear pairing 
\begin{equation*}
	\bbH^j(\hat X; \sc S) \otimes 
	\bbH^{n-j}(\hat X; \cD\sc S[-n]) \lra \bbR.
\end{equation*}
In terms of mezzoperversities, this says that there is a natural non-degenerate pairing
\begin{equation*}
	\bbH^j(\hat X; \cP(\cL)) \otimes \bbH^{n-j}(\hat X; \cP(\sD \cL)) \lra \bbR.
\end{equation*}
%

\section{Example: Self-dual sheaves} 

In this section we work with oriented $n$-dimensional topologically stratified pseudomanifolds $\hat X$  and sheaves of $\bbR$-vector spaces.
As pointed out in \cite[\S1.9]{BanaglShort}, one could more generally work with vector spaces over any field of characteristic not equal to two.

\begin{theorem}
If $\sc S$ is a self-dual sheaf in $SD(\hat X)$ in the sense of \cite{BanaglShort} and $\sL$ is its Lagrangian structure, then $\sc S [-n]$ satisfies axioms [RP] and its mezzoperversity $\cL_{\bS}$ is naturally identified with $\sL.$
\end{theorem}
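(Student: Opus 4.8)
The plan is to exploit the fact that Banagl's self-dual sheaf attached to $\sL$ is, over each open set $\cU_{k+1}$, produced by exactly the modified truncation construction of Section 3: at an odd-codimension stratum $Y_{n-k}$ where the Witt condition fails one truncates $Ri_{k*}(\sc S\rest{\cU_k})$ at $\bar n(k)$ using the Lagrangian subsheaf $\sL(Y_{n-k})$, and at every other stratum one uses the plain truncation. The overall shift by $[-n]$ only reconciles Banagl's normalization over the regular part with the one demanded by (RP1) here. So there will be two things to check: first, that $\sc S[-n]$ lies in $RP(\hat X)$; second, that the mezzoperversity extracted from $\sc S[-n]$ by Theorem \ref{thm:DeligneSheaf} is, subsheaf by subsheaf and together with its inclusion maps, the Lagrangian structure $\sL$.

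For the first point I would read the axioms off Banagl's defining axioms for $SD(\hat X)$: his normalization axiom gives (RP1) after the shift, his lower bound axiom gives (RP2), and his stalk vanishing axiom is precisely the $\bar n$-stalk condition and so gives (RP3). The remaining axiom (RP4), $\bar m$-costalk vanishing, will follow from Banagl's self-duality axiom, which in the present conventions says that $\sc S[-n]$ is isomorphic to its own Verdier dual in the sense of Proposition \ref{prop:FirstProp}(i), i.e. $\cD(\sc S[-n])[-n]\cong \sc S[-n]$: since $\bar m$ and $\bar n$ are complementary perversities and Verdier duality interchanges the $\bar n$-stalk and $\bar m$-costalk conditions on constructible complexes satisfying (RP1)--(RP2) --- the mechanism used in the proof of Proposition \ref{prop:FirstProp}(i) --- the $\bar n$-stalk vanishing of $\sc S[-n]$ forces the $\bar m$-costalk vanishing of its Verdier dual, hence of $\sc S[-n]$ itself. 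Then $\sc S[-n]\in RP(\hat X)$, and by Theorem \ref{thm:DeligneSheaf} it has a well-defined mezzoperversity $\cL_{\bS}$.

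For the second point I would argue by induction over the skeleta, as in the proof of Theorem \ref{thm:DeligneSheaf}, comparing $\cL_{\bS}$ with $\sL$ one stratum at a time. Over $\cU_2$ and $\cU_3$ there is no data to compare, and at a stratum of even codimension neither construction inserts a new subsheaf. Suppose $k$ is odd and that the isomorphism $\nu_k\colon \cP_k(\cL_{\bS})\to \sc S[-n]\rest{\cU_k}$ has been built, identifying the parts of $\cL_{\bS}$ and $\sL$ over the lower strata. By construction Banagl's sheaf satisfies $\sc S[-n]\rest{\cU_{k+1}}\cong \tau_{\leq \bar n(k)}\bigl(Ri_{k*}(\sc S[-n]\rest{\cU_k}),\,\sL(Y_{n-k})\bigr)$ for the Lagrangian subsheaf $\sL(Y_{n-k})\hookrightarrow \bH^{\bar n(k)}\bigl(Ri_{k*}(\sc S[-n]\rest{\cU_k})\bigr)$, so by \eqref{eq:CohoTau} the cohomology sheaf $\bW_{\bS}(Y_{n-k})=\bH^{\bar n(k)}(\sc S[-n])\rest{Y_{n-k}}$ is the image of $\sL(Y_{n-k})$, and the adjunction map $\sc S[-n]\rest{\cU_{k+1}}\to Ri_{k*}(\sc S[-n]\rest{\cU_k})$ induces on $\bH^{\bar n(k)}$ precisely the inclusion of this image. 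On the other hand, by Proposition \ref{prop:SIsItsExtension} and the construction in the proof of Theorem \ref{thm:DeligneSheaf}, the mezzoperversity datum at this stage is exactly $\bW_{\bS}(Y_{n-k})$ equipped with this same adjunction map, transported through $\nu_k$ into $\bH^{\bar n(k)}(Ri_{k*}\cP_k(\cL_{\bS}))$. Since $\sL(Y_{n-k})\to \mathrm{im}\,\sL(Y_{n-k})$ is an isomorphism, the $k$-th piece of $\cL_{\bS}$, as a subsheaf with its inclusion, is canonically $\sL(Y_{n-k})$; extending $\nu_k$ to $\nu_{k+1}$ as in Theorem \ref{thm:DeligneSheaf} continues the induction. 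Running through all strata identifies $\cL_{\bS}$ with the underlying data of $\sL$; and since a Lagrangian structure in the sense of \cite{BanaglShort} is exactly a mezzoperversity whose subsheaves are Lagrangian with respect to the link intersection pairings, this is the asserted natural identification.

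The step I expect to be the main obstacle is the bookkeeping in the inductive step: one has to verify not merely that the subsheaves agree but that their inclusion maps into the relevant degree-$\bar n(k)$ cohomology sheaves agree, which forces a careful matching of the target $\bH^{\bar n(k)}(Ri_{k*}\cP_k(\cL_{\bS}))$ occurring in the definition of a mezzoperversity with the link bundle carrying Banagl's pairing, under the inductive isomorphism $\nu_k$. One also has to be scrupulous about the overall $[-n]$-shift, and about the precise form of Banagl's duality axiom, when deducing (RP4); both are routine but easy to get wrong.
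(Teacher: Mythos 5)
Your proposal is correct and follows essentially the same route as the paper: matching (SD1)--(SD3) with (RP1)--(RP3) up to the shift, obtaining the $\bar m$-costalk condition (RP4) for the self-dual complex, and identifying the mezzoperversity with the Lagrangian structure as the degree-$\bar n(k)$ cohomology sheaves over the odd-codimension strata together with their monomorphisms. The only cosmetic difference is that the paper simply cites \cite[Lemma 2.1]{BanaglShort} for (RP4), whereas you re-derive it from the self-duality axiom via the stalk/costalk interchange of Proposition \ref{prop:FirstProp}(i), which is the same underlying mechanism and is valid.
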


\begin{proof}
The category $SD(\hat X)$ is defined in \cite{BanaglShort} by four axioms ($SD1$)-($SD4$). 
Up to a degree shift by the dimension of $\hat X$, the first three axioms coincide with ($RP1$)-($RP3$) and by \cite[Lemma 2.1]{BanaglShort} elements of $SD(\hat X)$ also satisfy ($RP4$). Finally, both the Lagrangian structure and the mezzoperversity consist of the cohomology sheaves of $\sc S$ in degree $\bar n(k)$ over the strata of codimension $k$ for odd $k$ (together with the requisite monomorphisms). 
\end{proof}

Note that there are topological restrictions to finding objects in the category $SD(\hat X),$ for instance the signature of the links must vanish.
\begin{definition}
A {\em topological} stratified pseudomanifold $\hat X$ is an {\bf L-space} if $SD(\hat X) \neq \emptyset.$
\end{definition}

\begin{example}
The suspension $\hat X = \Sigma T^2$ of the $2$-torus $T^2$ is an L-space, as the signature
of $T^2$ vanishes.
The suspension $\hat X = \Sigma \mathbb{CP}^2$ of the complex projective space 
$\mathbb{CP}^2$ is not an L-space, as $\mathbb{CP}^2$ has signature $1$.
Note that both of these examples are indeed non-Witt spaces.
\end{example}

In the next section we show that the mezzoperversities in the analytic theory of \cite{ALMP13.1} coincide with the topological mezzoperversities described above.
In this setting of smoothly stratified spaces, a {\bf Cheeger space} is one admitting a self-dual analytic mezzoperversity.
Thus in the smooth setting, a self-dual mezzoperversity is equivalent to a Lagrangian structure in the sense of \cite{BanaglShort}, in particular we have
\begin{proposition}
A smoothly stratified pseudomanifold is a {\bf Cheeger space} iff it is an L-space.
\end{proposition}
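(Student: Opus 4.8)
The plan is to deduce the statement directly from the comparison between the analytic and topological theories that has been set up (and, in the final section, established). The key point is that "Cheeger space" and "L-space" are by definition the conditions that the respective theories admit a \emph{self-dual} object, so what has to be shown is that the equivalence between analytic mezzoperversities and topological mezzoperversities proved in the last section is compatible with duality: an analytic mezzoperversity is self-dual if and only if the associated topological mezzoperversity is self-dual, and (by the theorem in this section on self-dual sheaves) a self-dual topological mezzoperversity is the same thing as a Lagrangian structure on an object of $SD(\hat X)$.

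Concretely, I would proceed as follows. First, recall that by the results of the final section, sheafifying the analytic domains of an analytic mezzoperversity $\cL^{\mathrm{an}}$ produces a sheaf complex in $RP(\hat X)$ whose topological mezzoperversity $\cL^{\mathrm{top}}$ is equivalent to $\cL^{\mathrm{an}}$, and conversely every topological mezzoperversity arises this way; moreover global hypercohomology of this sheaf agrees with the global $L^2$ de Rham cohomology. Second, observe that the analytic Poincar\'e duality pairing of \cite{ALMP13.1} between the cohomology of $\cL^{\mathrm{an}}$ and that of its dual analytic mezzoperversity $\sD\cL^{\mathrm{an}}$ is, under this identification, the Verdier duality pairing of Section 4 between $\bbH^j(\hat X;\cP(\cL^{\mathrm{top}}))$ and $\bbH^{n-j}(\hat X;\cP(\sD\cL^{\mathrm{top}}))$; in particular the sheaf-theoretic dual mezzoperversity $\sD\cL^{\mathrm{top}}$ corresponds to the analytic dual $\sD\cL^{\mathrm{an}}$. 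Hence $\cL^{\mathrm{an}}$ is self-dual, $\sD\cL^{\mathrm{an}}=\cL^{\mathrm{an}}$, if and only if $\sD\cL^{\mathrm{top}}=\cL^{\mathrm{top}}$, i.e. the topological mezzoperversity is self-dual. Third, if $\cL^{\mathrm{top}}$ is self-dual then the Deligne sheaf $\cP(\cL^{\mathrm{top}})$ satisfies $\cD\cP(\cL^{\mathrm{top}})[-n]\cong\cP(\cL^{\mathrm{top}})$, so $\cP(\cL^{\mathrm{top}})[n]$ is a self-dual sheaf and $\hat X$ is an $L$-space; conversely, given a self-dual sheaf $\sc S\in SD(\hat X)$, the theorem just proved in this section shows $\sc S[-n]\in RP(\hat X)$ with a self-dual mezzoperversity $\cL_{\bS}$ (identified with the Lagrangian structure), which by the final section corresponds to a self-dual analytic mezzoperversity, so $\hat X$ is a Cheeger space.

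The main obstacle I expect is not in the formal "iff" bookkeeping but in verifying that the identification of cohomology theories in the final section is genuinely compatible with the \emph{pairings} and with the operation of passing to the \emph{dual} mezzoperversity --- i.e. that "sheafify the analytic domain" intertwines the analytic Hodge-star duality of \cite{ALMP13.1} with Verdier duality. One must check that the dual analytic mezzoperversity, defined via the $L^2$ pairing on harmonic forms along the links, sheafifies to the complex $\cD\cP(\cL^{\mathrm{top}})[-n]$, equivalently that the locally constant sheaf $\bW(Y_{n-k})$ attached at an odd-codimension stratum and its analytic counterpart transform in the same way under duality. Granting the stratum-by-stratum duality computation already carried out in the proof of the self-dual-sheaf theorem (the chain of Verdier-dual identities $\sc C(b')\cong j_{k*}\cA^*[-\bar n(k)]$), this reduces to matching the analytic orthogonal-complement construction with the linear-dual construction on these finite-dimensional local systems, which is where the real content lies; the rest is a diagram chase.

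\begin{proof}
Suppose $\hat X$ is a Cheeger space, so it carries a self-dual analytic mezzoperversity $\cL^{\mathrm{an}}$, meaning $\sD\cL^{\mathrm{an}}=\cL^{\mathrm{an}}$. By the results of the final section, sheafifying the analytic domains attached to $\cL^{\mathrm{an}}$ produces a sheaf complex $\cP(\cL)\in RP(\hat X)$, with $\cL$ the topological mezzoperversity equivalent to $\cL^{\mathrm{an}}$, and the analytic Poincar\'e duality of \cite{ALMP13.1} is carried to the Verdier duality pairing of Section 4. In particular the dual topological mezzoperversity $\sD\cL$ is the topological mezzoperversity of $\cD\cP(\cL)[-n]$ and corresponds to $\sD\cL^{\mathrm{an}}$; hence $\sD\cL=\cL$, that is $\cD\cP(\cL)[-n]\cong\cP(\cL)$. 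Therefore $\sc S:=\cP(\cL)[n]$ satisfies $\cD\sc S\cong\sc S$ and, shifted back, satisfies axioms $[RP]$; the four axioms $(SD1)$--$(SD4)$ follow from $(RP1)$--$(RP4)$ together with this self-duality, exhibiting $\sc S\in SD(\hat X)$. Thus $SD(\hat X)\neq\emptyset$ and $\hat X$ is an $L$-space.

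Conversely, suppose $\hat X$ is an $L$-space and pick $\sc S\in SD(\hat X)$ with Lagrangian structure $\sL$. By the theorem on self-dual sheaves above, $\sc S[-n]$ satisfies axioms $[RP]$, and its topological mezzoperversity $\cL_{\bS}$ is naturally identified with $\sL$; in particular $\cL_{\bS}$ is self-dual, since $\cD(\sc S[-n])[-n]=\cD\sc S[-2n]\cong\sc S[-n]$ (using $\cD\sc S\cong\sc S[n]$ for a self-dual sheaf) and uniqueness of the mezzoperversity (Theorem \ref{thm:DeligneSheaf}) forces $\sD\cL_{\bS}=\cL_{\bS}$. By the final section, $\cL_{\bS}$ corresponds to an analytic mezzoperversity $\cL^{\mathrm{an}}$, and the compatibility of the identification with duality gives $\sD\cL^{\mathrm{an}}=\cL^{\mathrm{an}}$. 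Hence $\hat X$ admits a self-dual analytic mezzoperversity, i.e. it is a Cheeger space.
\end{proof}
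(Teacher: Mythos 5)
Your proof is correct and follows essentially the same route as the paper, which states the proposition as an immediate consequence of the Section 5 theorem (Lagrangian structures $=$ mezzoperversities of shifted self-dual sheaves) combined with the final section's correspondence between analytic and topological mezzoperversities, whose compatibility with the respective duality operations is, there as in your write-up, asserted rather than re-derived. The only blemish is your shift bookkeeping for Verdier duality: with the paper's conventions one has $\cD(\sc A[k])\cong(\cD \sc A)[-k]$ and, for $\sc S\in SD(\hat X)$, $\cD\sc S\cong \sc S[-n]$ (i.e. $\cD\sc S[n]\cong\sc S$), not $\cD\sc S\cong\sc S$ or $\sc S[n]$ as written, though your intended conclusions ($\sD\cL_{\bS}=\cL_{\bS}$, and self-dual $\cL$ yields an object of $SD(\hat X)$) are unaffected.
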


Notice though that an L-space is a topologically stratified space, so the strata need not even be smoothable manifolds. Thus L-spaces constitute a larger class of spaces than Cheeger spaces.

\section{Example: de Rham/Hodge cohomology of $\iie$ metrics} \label{sec:HodgeCoho}

In this section we assume that all sheaves are real vector spaces and that the stratification is smooth.

A smooth stratification is the nomenclature introduced in \cite{ALMP11}. It coincides with the notion of 
{\em espace stratifi\'e} in \cite{BHS}, controlled pseudomanifold \cite{Pflaum}, and Thom-Mather stratified space \cite{Pollini}, see also \cite{Kloeckner}. Mather showed \cite[\S 8]{Mather} that any Whitney stratified subset of a smooth space, such as real and complex analytic varieties, admits Thom-Mather control data, hence a smooth stratification.
In \cite{ALMP11} it is shown that this class of spaces is equivalent to manifolds with corners endowed with {\em iterated fibration structures,} a class first defined by Richard Melrose.

\begin{definition}
A manifold with corners $\wt X$ has an {\bf iterated fibration structure} if every boundary hypersurface $H$ is the total space of a fibration 
\begin{equation*}
	\phi_H: H \lra Y_H
\end{equation*}
whose base $Y_H$ is itself a manifold with corners, and at each intersection $H_1 \cap H_2$ we have a commutative diagram
\begin{equation*}
	\xymatrix{ H_1 \cap H_2 \ar[rr]^{\phi_{H_1}} \ar[rd]_{\phi_{H_2} }
		& & \phi_{H_1}(H_1 \cap H_2) \ar[ld]^{\phi_{H_1H_2}} \\
		& Y_{H_2} & }
\end{equation*}
where all arrows are fibrations.
\end{definition}

Given a manifold with corners $\wt X$ with an iterated fibration structure, we obtain a (smoothly) stratified pseudomanifold by collapsing the fibrations in an appropriate order. Conversely, given a smoothly stratified pseudomanifold $\hat X,$ radial blow-ups of the strata in an appropriate order results in a manifold with corners with an iterated fibration structure, called the resolution of $\hat X.$ There is a natural map
\begin{equation*}
	\beta:\wt X \lra \hat X
\end{equation*}
that identifies the interior of $\wt X$ with the regular part of $\hat X,$ either of which we denote $X.$
For details, we refer the reader to \cite{ALMP11}.

There is a natural class of functions on $\wt X$ respecting the iterated fibration structure,
\begin{equation*}
	\CI_{\Phi}(\wt X) = \{ f \in \CI(\wt X): i_H^*f = \phi_H^*f_{Y_H} \},
\end{equation*}
where $i_H: H \hookrightarrow \wt X$ is the inclusion of the hyperplane $H$ and
$f_{Y_H} \in \CI (Y_H)$.
We view this as a natural replacement for the smooth functions on $\hat X,$ so we define
\begin{equation}\label{eq:SmoothFunHatX}
	\CI(\hat X) = \{ f \in \cC^0(\hat X) : \beta^*f\rest{X} \in \CI_{\Phi}(\wt X)\rest{X} \}.
\end{equation}

\begin{lemma} \label{lem:PartitionUnity} $ $
Given any open cover of a smoothly stratified space, $\hat X,$ there is a subordinate partition of unity in $\CI(\hat X).$
\end{lemma}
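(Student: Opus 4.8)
The plan is to reduce the statement to the ordinary smooth partition of unity on the manifold with corners $\wt X$ and then push it down via $\beta$, checking that the resulting functions lie in $\CI(\hat X)$ as defined in \eqref{eq:SmoothFunHatX}. First I would observe that an open cover $\{V_\alpha\}$ of $\hat X$ pulls back under $\beta$ to an open cover $\{\beta^{-1}(V_\alpha)\}$ of the compact (or paracompact) manifold with corners $\wt X$. The key point is that the notion of $\Phi$-compatible function, $\CI_\Phi(\wt X)$, is a subsheaf of $\CI(\wt X)$ that is closed under multiplication and contains the constants, and—crucially—is \emph{local on the base of each fibration}: a function is $\Phi$-compatible iff it is so in a neighborhood of each boundary hypersurface, and near $H$ this just means it is pulled back from $Y_H$ along $\phi_H$ (compatibly at corners, as in the iterated fibration diagram). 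So I would construct a partition of unity subordinate to $\{\beta^{-1}(V_\alpha)\}$ \emph{within} $\CI_\Phi(\wt X)$, not merely within $\CI(\wt X)$.

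To do that, I would first produce, for each boundary hypersurface $H$ and each point of $Y_H$, local bump functions on $Y_H$ and pull them back by $\phi_H$; the iterated fibration compatibility at intersections $H_1\cap H_2$ guarantees these local pullbacks from different hypersurfaces can be multiplied and patched consistently. Then, working by induction on the depth of the corner (equivalently, on the number of strata), I would assemble a $\Phi$-compatible smooth partition of unity: near the interior $X$ one uses ordinary bump functions, and near each boundary face one replaces them by their fiberwise averages or by pullbacks from the base, using a $\Phi$-compatible collar neighborhood of $H$ and an interpolation that is itself $\Phi$-compatible. This is the standard argument that $\CI_\Phi(\wt X)$ admits partitions of unity subordinate to any open cover; the only subtlety is bookkeeping the nested compatibility conditions at the corners, which the commutative diagram in the definition of iterated fibration structure is precisely designed to handle.

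Finally, given a $\Phi$-compatible partition of unity $\{\rho_\alpha\}\subset\CI_\Phi(\wt X)$ subordinate to $\{\beta^{-1}(V_\alpha)\}$, I would push it forward: since $\beta$ restricts to a homeomorphism from $\wt X$ onto $\hat X$ away from the boundary and, more importantly, $\Phi$-compatible functions are by construction constant along the collapsed fibers, each $\rho_\alpha$ descends to a well-defined continuous function $\bar\rho_\alpha$ on $\hat X$ with $\beta^*\bar\rho_\alpha = \rho_\alpha$. By the very definition \eqref{eq:SmoothFunHatX}, $\bar\rho_\alpha \in \CI(\hat X)$, the supports remain subordinate to $\{V_\alpha\}$ (since $\beta$ is proper and continuous), and $\sum_\alpha \bar\rho_\alpha \equiv 1$ because the identity holds upstairs and $\beta$ is surjective.

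\textbf{Main obstacle.} The hard part will be the inductive construction of the partition of unity \emph{inside} $\CI_\Phi(\wt X)$ rather than in $\CI(\wt X)$: near a corner of high depth one must choose the local functions to be simultaneously pulled back from all the relevant fibration bases in a mutually consistent way, and one must interpolate between a "fiberwise-constant near the boundary" regime and an "arbitrary in the interior" regime without destroying $\Phi$-compatibility. Concretely, one cannot simply multiply an ordinary partition of unity by a cutoff, because the cutoff itself must be $\Phi$-compatible; the fix is to build the partition of unity from the outside in, starting at the deepest stratum and using $\Phi$-compatible collars and fiberwise averaging at each successive step, which is exactly where the compatibility diagram of the iterated fibration structure is used. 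Everything else—the descent along $\beta$ and the verification of the defining property \eqref{eq:SmoothFunHatX}—is then formal.
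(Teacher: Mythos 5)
Your overall route is viable, but it is genuinely different from the paper's, and the place where you locate the difficulty is exactly where the paper arranges to have no difficulty at all. Unwinding the definition \eqref{eq:SmoothFunHatX}, a partition of unity in $\CI(\hat X)$ \emph{is} a $\Phi$-compatible partition of unity on $\wt X$ subordinate to the lifted cover, so your first and last steps (lifting the cover, descending the functions) are formal; all the content is in your middle step. The paper avoids the induction on boundary depth entirely by working downstairs: it first refines the given cover to what it calls an \emph{esteemed} cover, made of coordinate balls in the regular part and distinguished Thom--Mather neighborhoods $\cV_\alpha \cong \bbB^{h(\alpha)}\times [0,3)_r\times Z_{q_\alpha}$ near the singular strata, and then takes bump functions of the Euclidean and radial coordinates only, \emph{disregarding the link variable}. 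Such functions are automatically in $\CI(\hat X)$ --- their lifts are constant on the fibers of every relevant boundary fibration --- so no fiberwise averaging, collar interpolation, or corner bookkeeping is needed; one simply normalizes by the locally finite sum. What the refinement buys is precisely the elimination of the "main obstacle" you describe.

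Two points in your sketch need attention. First, the averaging/pullback step as stated does not obviously preserve subordination for an \emph{arbitrary} cover: $\beta^{-1}(V_\alpha)$ is a union of $\phi_H$-fibers only over the hypersurface $H$ itself, not throughout a collar, so replacing an ordinary bump function by its fiberwise average (or by a pullback from $Y_H$) on a collar can push its support outside $\beta^{-1}(V_\alpha)$, and cutting off in the boundary defining function does not repair this in the fiber directions. The standard cure is to refine first to fibration-saturated basic sets --- i.e., lifts of distinguished neighborhoods --- after which the local functions can be chosen pulled back from the base-times-radial factor from the outset; at that point your induction on depth collapses to the paper's one-step construction, so you should build the refinement into the argument rather than treat it as optional. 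Second, your descent step ("$\Phi$-compatible functions are constant along the collapsed fibers, hence descend continuously") is true but itself requires the corner compatibility diagram and an induction on depth, to see that each base function $f_{Y_H}$ is again $\Phi$-compatible on $Y_H$ and hence descends over the \emph{closure} of the stratum; this identification of $\CI_{\Phi}(\wt X)$ with $\CI(\hat X)$ is the content of the resolution/blow-down correspondence in \cite{ALMP11} and should be invoked or proved, not asserted.
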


\begin{proof}
We follow the presentation of \cite[Chapter 2]{Lee:IntroSmooth} for smooth manifolds.
We start by defining the analogue of a `regular cover'.

Since $\hat X$ has finitely many strata, each of which is paracompact, $\hat X$ is itself paracompact.
(Let us assume, for simplicity of notation, that we have picked radial functions $r$ at each stratum so that $\{ r < 3 \}$ is a (Thom-Mather) tubular neighborhood.)
Let us briefly say that an open cover $\{ \cV_{\alpha} : \alpha \in \cA \}$ of $\hat X$ is {\em esteemed} if it is a countable, locally finite cover by precompact open sets such that: if $\cV_{\alpha} \subseteq \cU_2$ then there is a coordinate chart 
\begin{equation*}
	\phi_{\alpha}: \cV_{\alpha} \lra \bbR^n
\end{equation*}
such that $\phi_{\alpha}(\cV_{\alpha}) = \bbB_{3}(0),$ and we set $h(\alpha) = n;$
if $\cV_{\alpha}$ is not contained in $\cU_2$ then there is a singular stratum $Y_{h(\alpha)},$ a point $q_{\alpha} \in Y_{h(\alpha)},$ and a distinguished neighborhood chart
\begin{equation*}
	\phi_{\alpha}: \cV_{\alpha} \lra \bbR^{h(\alpha)} \times [0,3)_r \times Z_{q_{\alpha}}
\end{equation*}
such that $\phi_{\alpha}(\cV_{\alpha}) = \bbB_{3}(0) \times [0,3)_r \times Z_{q_{\alpha}};$
finally, we demand that the collection $\{ \wt \cV_{\alpha} : \alpha \in \cA \}$ given by
\begin{equation*}
	\wt \cV_{\alpha} =
	\begin{cases}
	 \phi_{\alpha}^{-1}(\bbB_1(0)) & \Mif \cU_{\alpha} \subseteq X \\
	 \phi_{\alpha}^{-1}(\bbB_{1}(0) \times [0,1)_r \times Z_{q_{\alpha}} ) & \text{ otherwise }
	\end{cases}
\end{equation*}
still covers $\hat X.$ Thus essentially a cover is esteemed if it is made up of distinguished open sets.

If $\hat X$ is smooth then an esteemed cover is just a regular cover, and every open cover has a regular refinement \cite[Proposition 2.24]{Lee:IntroSmooth}.
Using this fact at each stratum, it is easy to see that any open cover of $\hat X$ has an esteemed refinement.
It is also easy to show that an esteemed cover has a subordinate partition of unity in $\CI(\hat X)$:
For each $k<n,$ let $\rho_k: \bbR^k \times \bbR^+ \lra \bbR$ be a smooth non-negative function equal to one on $\bbB_1(0) \times [0,1)$ and supported in $\bbB_3(0) \times [0,3),$ also let $\rho_n: \bbR^n \lra \bbR$ be a smooth non-negative function equal to one on $\bbB_1(0)$ and supported in $\bbB_3(0).$
For each $\alpha$ let
\begin{equation*}
	\bar \psi_{\alpha}: \cV_{\alpha} \lra \bbR, \quad
	\bar \psi_{\alpha} = \rho_{h(\alpha)} \circ \phi_{\alpha}
\end{equation*}
where we let $\rho_{h(\alpha)}$ act on $\bbB_{3}(0) \times [0,3)_r \times Z_{q_{\alpha}}$ by disregarding the last coordinate.
Finally, the functions
\begin{equation*}
	\psi_{\alpha} = \frac{ \bar \psi_{\alpha} }{ \sum_{\alpha \in \cA} \bar \psi_{\alpha} }
\end{equation*}
are a partition of unity in $\CI(\hat X)$ subordinate to $\{ \cV_{\alpha} : \alpha \in \cA \}.$

\end{proof}

There is a class of metrics that reflects the conic structure of a stratified space near a singular stratum, the {\bf iterated incomplete edge, or $\iie,$ metrics} (also known as iterated wedge metrics). First consider $\hat X$ with a total of two strata, say with filtration $Y \subseteq \hat X,$ and let $x$ be a {\bf boundary defining function} for $Y$ meaning a non-negative function $x \in \CI(\hat X)$ such that $Y= x^{-1}(0)$ and $|dx|$ is bounded away from zero as $x\to 0.$
An $\iie$ metric on $\hat X$ is a Riemannian metric on $\hat X^{\reg} = \cU_2$ that in a tubular neighborhood of $Y$ has the form
\begin{equation}\label{eq:LocalMet}
	dx^2 + x^2 g_Z + \phi^*g_Y
\end{equation}
where $g_Y$ is a Riemannian metric on $Y$ and $g_Z$ restricts to the fibers over $Y$ to be a Riemannian metric on the link.
An $\iie$ metric on a general stratified space $\hat X$ is defined, inductively over the depth of $\hat X,$ as a Riemannian metric on $\hat X^{\reg}$ that near each singular stratum has the form \eqref{eq:LocalMet} where now $g_Z$ is itself an $\iie$ metric on the stratified space $Z.$
As an $\iie$ metric is defined on the regular part of $\hat X$ we can equally well consider it as a metric on the interior of $\wt X.$
Any smoothly stratified space can be endowed with an $\iie$ metric. (In our constructions below we will assume that we are working with a {\em rigid, suitably scaled} $\iie$ metric, these metrics also exist on any stratified space as shown in \cite{ALMP11} to which we refer for the definitions and details.)

On a singular manifold it is often useful to replace the (co)tangent bundle with a bundle adapted to the geometry. 
For a stratified space with an $\iie$ metric, $g,$ a natural choice is the $\iie$-cotangent bundle, $\Iie T^*X,$ defined over $\wt X$ as those one-forms whose restriction to each boundary hypersurface vanishes on vertical vector fields.
Thus sections of $\Iie T^*X$ are locally spanned by
\begin{equation*}
	dx, \quad dy, \quad x\; dz.
\end{equation*}
We can also identify $\Iie T^*X$ as the bundle whose sections are one-forms on $\wt X$ with bounded pointwise length with respect to $g.$
This makes it easy to see that the metric $g$ induces a bundle metric on $\Iie T^*X.$
An advantage of $\Iie T^*X$ over $T^*\wt X$ is that forms like $x \; dz$ that vanish at $x=0$ as sections of $T^*\wt X$ do not vanish at $x=0$ as sections of $\Iie T^*X,$ which better reflects the structure of the space.
The metric on $\Iie T^*X$ allows us to define the space of $L^2$ $\iie$-differential forms,
\begin{equation*}
	L^2(\wt X; \Lambda^*(\Iie T^*X)) = L^2(\hat X^{\reg}; \Lambda^* T^*\hat X^{\reg} ),
\end{equation*}
where the equality comes from identifying an $L^2$-form on $\wt X$ with its restriction to 
the interior of $\wt X.$

The exterior derivative on $\hat X^{\reg}$ extends from $\hat X^{\reg}$ to a differential operator on $\wt X$ acting on $\iie$-differential forms
\begin{equation}\label{eq:Initiald}
	d: \CIc(\wt X; \Lambda^j (\Iie T^*X) )
	\lra \CIc(\wt X; \Lambda^{j+1} (\Iie T^*X) )
\end{equation}
where we will use $\CIc$ to denote forms that are compactly supported in the interior of $\wt X$ or equivalently the regular part of $\hat X.$
The space $\CIc(\wt X; \Lambda^j (\Iie T^*X) )$ is dense in $L^2(\wt X; \Lambda^*(\Iie T^*X))$ so the operator \eqref{eq:Initiald} has two canonical extensions to a closed operator on the latter; the minimal extension of $d$ is the operator $d$ with domain
\begin{equation*}
	\cD_{\min}(d) = \{ \omega \in L^2(\wt X; \Lambda^*(\Iie T^*X)) : 
	\exists (\omega_n) \subseteq \CIc(\wt X; \Lambda^j (\Iie T^*X) ) \Mst \omega_n \to \omega \Mand (d\omega_n) \text{ Cauchy} \}
\end{equation*}
where we set $d\omega = \lim d\omega_n,$
and the maximal extension of $d$ is the operator with domain
\begin{equation*}
	\cD_{\max}(d) = \{ \omega \in L^2(\wt X;\Lambda^*(\Iie T^*X)) : \text{ $d\omega,$ computed distributionally, is in } L^2(\wt X;\Lambda^*(\Iie T^*X)) \}.
\end{equation*}
All closed extensions $(d, \cD(d))$ of \eqref{eq:Initiald} satisfy
\begin{equation*}
	\cD_{\min}(d) \subseteq \cD(d) \subseteq \cD_{\max}(d).
\end{equation*}

The formal adjoint of $d,$ 
\begin{equation}\label{eq:Initialdelta}
	\delta: \CIc(\wt X; \Lambda^j (\Iie T^*X) )
	\lra \CIc(\wt X; \Lambda^{j-1} (\Iie T^*X) ),
\end{equation}
has analogously defined domains $\cD_{\min}(\delta)$ and $\cD_{\max}(\delta)$ and these satisfy
\begin{equation*}
	( d, \cD_{\min}(d) )^* = (\delta, \cD_{\max}(\delta)), \quad
	( d, \cD_{\max}(d) )^* = (\delta, \cD_{\min}(\delta)).
\end{equation*}
$ $

We now recall the natural class of domains for $d$ studied in  \cite{ALMP13.1}.
Consider first a stratified space $\hat X$ with a total of two strata, with filtration $Y \subseteq \hat X,$ as above.
Let $H$ be the boundary hypersurface lying above $Y$ in the resolution of $\hat X,$ $\wt X.$
If $\hat X$ is a Witt space, i.e., if $Y$ has even codimension or the vertical (Hodge) cohomology groups\footnote{$\cH^{\mid}(Z)$ refers to space of harmonic forms on $Z$ with respect to the induced metric, of differential form degree equal to half of the dimension of $Z.$} $\cH^{\mid}(Z)$ vanish, then 
\begin{equation*}
	\cD_{\max}(d) = \cD_{\min}(d).
\end{equation*}
Otherwise, given $\omega \in \cD_{\max}(d),$ let $\omega_{\delta}$ be its orthogonal projection off of $\ker (\delta, \cD_{\min}(\delta)).$
It is shown in {\em loc.cit.} that $\omega_{\delta}$ has a (distributional) asymptotic expansion at the boundary with leading term 
$\alpha(\omega_{\delta}) + dx \wedge \beta(\omega_{\delta})$ and that $\omega \in \cD_{\min}(d)$ if and only if 
$\alpha(\omega_{\delta}) = \beta(\omega_{\delta}) = 0.$
Thus we can define domains for $d$ by imposing conditions on this leading term.
The vertical cohomology bundle $\cH^{\mid}(H/Y) \lra Y$ has a natural flat structure, 
which can equivalently be thought of as coming from the exterior derivative on $H$ or from the homotopy invariance of the vertical de Rham cohomology.
If $W \subseteq \cH^{\mid}(H/Y)$ is a flat subbundle, then we define a domain corresponding to {\bf Cheeger ideal boundary conditions} from $W$ by
\begin{equation*}
	\cD_{W}(d) = \{ \omega \in \cD_{\max}(d) : \alpha(\omega_{\delta}) \text{ is a distributional section of } W \}.
\end{equation*}
In \cite{ALMP13.1} it is shown that $(d, \cD_W(d))$ is a closed operator and that filtering by differential form degree yields a Fredholm de Rham complex, for any such $W.$

For a general stratified space, we define domains for $d$ inductively.
If $\hat X$ is Witt, then $\cD_{\min}(d) = \cD_{\max}(d).$ 
Otherwise given a form $\omega \in \cD_{\max}(d)$ and $\omega_{\delta}$ its projection off of $\ker(\delta, \cD_{\min}(\delta)),$
$\omega$ has a distributional expansion at\footnote{If $Y_{n-3} = \emptyset$ then there is no need to impose a boundary condition here, for simplicity of exposition we  assume for the moment that all singular strata of  odd codimension are non-empty and non-Witt.} $Y_{n-3}$ whose leading term we denote
\begin{equation*}
	\alpha_{n-3}(\omega_{\delta}) + dx\wedge\beta_{n-3}(\omega_{\delta}).
\end{equation*}
We can impose Cheeger ideal boundary conditions at $Y_{n-3}$ by choosing a flat subbundle $W(Y_{n-3})$ of the middle-degree vertical Hodge bundle over $Y_{n-3},$ 
\begin{equation*}
	\cH^{\mid}(Z_{n-3}) - \cH^{\mid}(H_{n-3}/Y_{n-3}) \lra Y_{n-3},
\end{equation*}
where $H_{n-3}$ is the boundary hypersurface of $\wt X$ lying above $Y_{n-3},$
and defining
\begin{equation*}
\begin{gathered}
	\wt\cL_3 = \{ W(Y_{n-3}) \}\\
	\cD_{\max, \wt\cL_3}(d) 
	= \{ \omega \in \cD_{\max}(d) : \alpha(\omega_{\delta}) \text{ is a distributional section of } W(Y_{n-3}) \}.
\end{gathered}
\end{equation*}
This domain defines a closed extension of $d$ with adjoint $(\delta, \cD_{\min, \wt\cL_3}(\delta)).$

The next odd codimensional stratum is  $Y_{n-5}$ and we denote the link of $\hat X$ at this stratum by $Z_{n-5}.$ This link is itself 
a stratified space; it has a  single singular stratum the link of which is $Z_{n-3},$ so the boundary conditions $\wt\cL_3$ imposed on the exterior derivative of $\hat X$ at $Y_{n-3}$ induce boundary conditions $\wt\cL_3(Z_{n-5})$ for the exterior derivative of $Z_{n-5}.$ 
The Hodge bundle over $Y_{n-5}$ with these boundary conditions, 
\begin{equation*}
	\cH^{\mid}_{\wt\cL_3(Z_{n-5})}(Z_{n-5}) - \cH^{\mid}_{\wt\cL_3(Z_{n-5})}(H_{n-5}/Y_{n-5}) \lra Y_{n-5},
\end{equation*}
inherits a flat structure and to impose boundary conditions at $Y_{n-5}$ we choose a flat subbundle $W_{n-5}.$
Unfortunately, the form $\omega_{\delta}$ above need not have an asymptotic expansion at $Y_{n-5}.$
However, if $\omega \in \cD_{\max, \wt\cL_3}(d)$ and $\omega_{\delta, \wt\cL_3}$ is its orthogonal projection off of $\ker (\delta, \cD_{\min, \wt\cL_3}(\delta))$
then $\omega_{\delta, \wt\cL_3}$ has a distributional asymptotic expansion at $Y_{n-5},$ with leading term denoted
\begin{equation*}
	\alpha(\omega_{\delta, \wt\cL_3}) + dx\wedge \beta(\omega_{\delta, \wt\cL_3}).
\end{equation*}
We define
\begin{equation*}
\begin{gathered}
	\wt\cL_5 = \{ W(Y_{n-3}), W(Y_{n-5}) \} \\
	\cD_{\max, \wt\cL_5}(d)
	= \{ \omega \in \cD_{\max}(d) : \alpha(\omega_{\delta}) \text{ is a distributional section of $W(Y_{n-3})$ over $Y_{n-3}$} \\
	\Mand \alpha(\omega_{\delta, \wt\cL_3})  \text{ is a distributional section of $W(Y_{n-5})$ over $Y_{n-5}$} \}.
\end{gathered}
\end{equation*}

We continue in this way, iteratively choosing and imposing boundary conditions at each non-Witt stratum of even codimension, until eventually we can define, with $\ell=0$ if $n$ is odd, and else $\ell=1,$
\begin{equation*}
\begin{gathered}
	\wt\cL = \{ W(Y_{n-3}), W(Y_{n-5}), \ldots, W(Y_{\ell}) \}\\
	\cD_{\wt\cL}(d)
	= \{ \omega \in \cD_{\max}(d) : 
	\alpha(\omega_{\delta}) \text{ is a distributional section of $W(Y_{n-3})$ over $Y_{n-3},$} \\
	\alpha(\omega_{\delta, \wt\cL_3})  \text{ is a distributional section of $W(Y_{n-5})$ over $Y_{n-5},$ } \ldots,\\
	\alpha(\omega_{\delta, \wt\cL_{n-\ell-2}})  \text{ is a distributional section of $W(Y_{\ell})$ over $Y_{\ell}$} \}.
\end{gathered}
\end{equation*}
We refer to these boundary conditions as {\bf Cheeger ideal boundary conditions} corresponding to the analytic mezzoperversity $\wt\cL.$
It is shown in \cite{ALMP13.1} that $(d, \cD_{\wt\cL}(d))$ is a closed operator and filtering by differential form degree yields a Fredholm de Rham complex.\\

A useful fact is that this domain is closed under multiplication by functions in the space $\CI(\hat X)$ from \eqref{eq:SmoothFunHatX}.
Indeed, if $\omega \in \cD_{\wt\cL}(d)$ and $f \in \CI(\hat X)$ then $f\omega \in L^2(\wt X;\Lambda^*(\Iie T^*X))$
and 
\begin{equation*}
	d(f\omega) = df \wedge \omega + f d\omega
\end{equation*}
is also in $L^2(\wt X;\Lambda^*(\Iie T^*X))$ since $df$ is an $\iie$ one-form;
moreover it is easy to see that the leading term in the expansion of $(f\omega)_{\delta, \wt\cL_k}$ at a stratum $Y_{n-k-2}$ is the leading term of $\omega_{\delta, \wt\cL_k}$ multiplied by $f\rest{Y_{n-k-2}} \in \CI(Y_{n-k-2}),$ so $\omega \in \cD_{\wt\cL}(d)$ implies $f\omega \in \cD_{\wt\cL}(d).$

\begin{definition}
Let us define a pre-sheaf on $\hat X$ by assigning to each open set $\cU \subseteq \hat X$ the vector space
\begin{equation*}
	\cD_{\wt\cL}(\cU) := \{ \omega \in \cD_{\wt\cL}(d): \supp \omega \subseteq ( \cU \cap \hat X^{\reg}) \}
\end{equation*}
and assigning to each inclusion $j:\cV \hookrightarrow \cU$ of open sets the restriction map
\begin{equation*}
	j^*: \cD_{\wt\cL}(\cU) \lra \cD_{\wt\cL}(\cV).
\end{equation*}
This presheaf is filtered by differential form degree and the exterior derivative makes it into a complex of presheaves.
We sheafify and denote the corresponding sheaf complex by $\bL^2_{\wt\cL}\sc\bOm \in \Sh^{\bullet}(\hat X).$
\end{definition}

It is worth pointing out that sheafification is necessary here. The presheaf $\cU \mapsto \cD_{\wt\cL}(\cU)$ is not a sheaf as it does not satisfy the unique gluing condition.
This is true even at the level of zero-forms. Indeed, if $x$ is a function that vanishes linearly on a singular stratum and $\{ \cV_{\alpha} : \alpha \in \sA \}$ is an open cover of the regular part of $\hat X$ such that each $\bar {\cV}_{\alpha} \subseteq \hat X^{\reg},$ then the sections
\begin{equation*}
	\tfrac1{x^2} \in \cD_{\wt\cL}(\cV_{\alpha} )
\end{equation*}
coincide on overlaps. However as $\tfrac1{x^2}$ is not an $L^2$-function on $\hat X^{\reg},$ these local sections do not glue together to an element of $\cD_{\wt\cL}(\hat X).$ Thus in general, for $\cU \subseteq \hat X,$
\begin{equation*}
	\Gamma( \cU, \bL^2_{\wt\cL}\sc\bOm ) \neq \cD_{\wt\cL}(\cU).
\end{equation*}
This makes the following result remarkable.

\begin{lemma} \label{lem:SheafDescription}$ $
\begin{itemize}
\item [1)]
The  space of global sections of the sheaf $\bL^2_{\wt\cL}\sc\bOm$ coincides with the  space of global sections of the presheaf $\cU \mapsto \cD_{\wt\cL}(\cU),$
\begin{equation*}
	\Gamma( \hat X, \bL^2_{\wt\cL}\sc\bOm ) = \cD_{\wt\cL}(\hat X).
\end{equation*}
\item [2)]
The sheaf $\bL^2_{\wt\cL}\sc\bOm$ satisfies
\begin{equation*}
	\Gamma(\cU, \bL^2_{\wt\cL}\sc\bOm) 
	= \{ \omega \in L^2_{\loc}(\cU): 
	\text{every } p \in \cU \text{ has a neighborhood $\cV \subseteq \cU$ such that } \omega\rest{\cV} \in \cD_{\wt\cL}(\cV) \}. 
\end{equation*}

\end{itemize}
\end{lemma}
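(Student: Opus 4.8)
The plan is: in part (2), to identify $\bL^2_{\wt\cL}\sc\bOm$ with an explicit subsheaf of the sheaf of locally $L^2$ $\iie$-forms by a soft sheafification argument; and in part (1), to upgrade the description of its \emph{global} sections using the partition of unity of Lemma \ref{lem:PartitionUnity} together with the $\CI(\hat X)$-module property of $\cD_{\wt\cL}(d)$ recorded just above the statement.

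For (2): the presheaf $\cU \mapsto \cD_{\wt\cL}(\cU)$ maps, compatibly with restrictions, into the sheaf $L^2_{\loc}$ of locally $L^2$ $\iie$-forms on $\hat X$, and this map is injective on every open set, so after sheafification $\bL^2_{\wt\cL}\sc\bOm$ becomes a subsheaf of $L^2_{\loc}$. Write $\cF(\cU)$ for the right-hand side of (2). First I would check that $\cF$ is a sheaf: $L^2_{\loc}$ is a sheaf, and the extra requirement defining $\cF(\cU)$ is local, hence stable under gluing. There is an obvious morphism of presheaves $(\cU \mapsto \cD_{\wt\cL}(\cU)) \to \cF$ — any $\sigma \in \cD_{\wt\cL}(\cU)$ lies in $L^2_{\loc}(\cU)$ and, with $\cV = \cU$, serves as its own local model. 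This morphism is an isomorphism on stalks: surjectivity of $\cD_{\wt\cL,p} \to \cF_p$ is built into the definition of $\cF$, and injectivity holds because a section $\sigma \in \cD_{\wt\cL}(\cV)$ which vanishes in $L^2_{\loc}$ near $p$ is already the zero section of the presheaf on a neighborhood of $p$. Since $\cF$ is a sheaf, the induced map $\bL^2_{\wt\cL}\sc\bOm \to \cF$ is then an isomorphism, which is precisely (2).

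For (1): let $s \in \Gamma(\hat X, \bL^2_{\wt\cL}\sc\bOm)$. By the construction of the sheafification there are an open cover $\{\cU_i\}_{i \in I}$ of $\hat X$ and sections $\sigma_i \in \cD_{\wt\cL}(\cU_i) \subseteq \cD_{\wt\cL}(d)$ representing $s$ over $\cU_i$; since two representatives of the same section have the same germ at each point, $\sigma_i = \sigma_j$ on $\cU_i \cap \cU_j$ as locally-$L^2$ forms. As $\hat X$ is compact we may take $I = \{1, \dots, N\}$ finite, and by Lemma \ref{lem:PartitionUnity} we may choose a partition of unity $\{f_i\}_{i=1}^N \subseteq \CI(\hat X)$ subordinate to $\{\cU_i\}$. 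Put $\omega = \sum_{i=1}^N f_i \sigma_i$. Each $f_i \sigma_i$ lies in $\cD_{\wt\cL}(d)$ because $\cD_{\wt\cL}(d)$ is closed under multiplication by functions in $\CI(\hat X)$, and is supported in $\cU_i$; the sum being finite, $\omega \in \cD_{\wt\cL}(d)$, i.e. $\omega \in \cD_{\wt\cL}(\hat X)$ (the support condition being vacuous when $\cU = \hat X$). Finally $\omega$ represents $s$: near any $p$ only the $f_i$ with $p \in \supp f_i \subseteq \cU_i$ contribute, each such $\sigma_i$ agrees with $\sigma_j$ near $p$ for any $\cU_j \ni p$, and $\sum_i f_i \equiv 1$, so $\omega$ coincides with $\sigma_j$ near $p$ and hence has germ $s_p$. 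Thus the natural map $\cD_{\wt\cL}(\hat X) \to \Gamma(\hat X, \bL^2_{\wt\cL}\sc\bOm)$ is surjective; it is injective since a global section with vanishing germ at every point is zero in $L^2_{\loc}$. This proves (1).

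The only step that is not formal is the passage in (1) from the glued locally-$L^2$ form to a genuine element of the global domain: the presheaf fails to be a sheaf precisely because a naive gluing of $L^2_{\loc}$ sections can leave $\cD_{\wt\cL}(d)$, as the example $x^{-2}$ illustrates. Writing $\omega = \sum f_i \sigma_i$ circumvents this by reducing to finitely many honest global domain elements $\sigma_i$ and invoking the $\CI(\hat X)$-module structure of $\cD_{\wt\cL}(d)$ — and this is exactly why partitions of unity in $\CI(\hat X)$ (Lemma \ref{lem:PartitionUnity}) and the stability of $\cD_{\wt\cL}(d)$ under such multiplications were established beforehand. The remaining work is bookkeeping, chiefly with the support conventions in the presheaf so that its restriction maps and the inclusion $\bL^2_{\wt\cL}\sc\bOm \subseteq L^2_{\loc}$ are compatible.
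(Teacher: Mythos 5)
Your proposal is correct and follows essentially the same route as the paper: part (2) by exhibiting the right-hand side as a sheaf receiving a stalkwise isomorphism from the presheaf $\cU \mapsto \cD_{\wt\cL}(\cU)$, and part (1) by compactness, the $\CI(\hat X)$ partition of unity of Lemma \ref{lem:PartitionUnity}, and the closure of $\cD_{\wt\cL}(d)$ under multiplication by $\CI(\hat X)$. The only (immaterial) difference is that in (1) you glue local presheaf representatives of a section directly, whereas the paper first invokes (2) to identify $\Gamma(\hat X,\bL^2_{\wt\cL}\sc\bOm)$ with the locally-$\cD_{\wt\cL}$ forms and then applies the same partition-of-unity argument to a single such form.
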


\begin{proof}
We follow \cite[\S 4]{Bei}.\\

We prove ($2$) first. Note that the presheaf
\begin{equation*}
	\cU\mapsto \cE_{\wt\cL}(\cU)
	= \{ \omega \in L^2_{\loc}(\cU): 
	\text{every } p \in \cU \text{ has a neighborhood $\cV \subseteq \cU$ such that } \omega\rest{\cV} \in \cD_{\wt\cL}(\cV) \}
\end{equation*}
is in fact a sheaf, and the natural inclusions
\begin{equation*}
	\cD_{\wt\cL}(\cU) \lra \cE_{\wt\cL}(\cU)
\end{equation*}
form a map of presheaves and so induce a map of sheaves, $F: \bL^2_{\wt\cL}\sc\bOm \lra \cE_{\wt\cL}.$
It suffices to see that, for every $x \in \hat X,$ the induced map on stalks $F_x: \bL^2_{\wt\cL}\sc\bOm_x \lra (\cE_{\wt\cL})_x$ is an isomorphism which we show as follows:
Assume $F_x(s)$ is zero. Then it has a representative $\omega \in \cE_{\wt\cL}(\cU)$ with $\omega=0$ in a neighborhood $\cV$ of $x.$
Thus $0 \in \cD_{\wt\cL}(\cV)$ represents $s,$ i.e.,  $s=0,$ so $F_x$ is injective.
Next, if $t \in (\cE_{\wt\cL})_x$ is represented by  $\eta \in \cE_{\wt\cL}(\cU)$ then there is a neighborhood $\cV$  of $x$ in $\cU$ such that $\eta\rest{\cV} \in \cD_{\wt\cL}(\cV).$ So the germ of $\eta\rest{\cV}$ at $x,$ $s \in \bL^2_{\wt\cL}\sc\bOm_x,$ satisfies $F_x(s) =t.$\\

Now to prove ($1$), we start with the canonical map
\begin{equation*}
	\phi: \cD_{\wt\cL}(\hat X) \lra 
	\Gamma( \hat X, \bL^2_{\wt\cL}\sc\bOm(\hat X) ) 
\end{equation*}
sending an $L^2$-differential form $\omega$ to the section 
\begin{equation*}
	\hat X \ni x \mapsto \omega_x \in \bL^2_{\wt\cL}\sc\bOm_x,
\end{equation*}
where $\omega_x$ is the germ of $\omega$ at $x.$
In particular, $\phi(\omega)(x) = \phi(\eta)(x)$ means that there is an open set in $\hat X$ containing $x$ on which $\omega = \eta.$
Thus if $\phi(\omega) = 0$ then there are open sets covering $\hat X$ on which $\omega$ vanishes and hence $\omega =0,$ so $\phi$ is injective.

Having established ($2$), we know that
\begin{equation*}
	\Gamma( \hat X, \bL^2_{\wt\cL}\sc\bOm(\hat X) )
	= \{ \omega \in L^2_{\loc}(\hat X): 
	\text{every } p \in \hat X \text{ has a neighborhood $\cV \subseteq \hat X$ such that } \omega\rest{\cV} \in \cD_{\wt\cL}(\cV) \}.
\end{equation*}
Let $\omega \in \Gamma( \hat X, \bL^2_{\wt\cL}\sc\bOm(\hat X) )$ and for each $x \in \hat X$ let $\cV_x$ be an open neighborhood such that $\omega \in \cD_{\wt\cL}(\cV_x).$ These open sets cover $\hat X,$ so by compactness there is a finite sub cover
\begin{equation*}
	\hat X = \cV_{x_1} \cup \ldots \cup \cV_{x_N}.
\end{equation*}
By Lemma \ref{lem:PartitionUnity} there is a partition of unity $\{ \psi_1, \ldots, \psi_N \} \subseteq \CI_{\Phi}(\wt X)$ subordinate to  the cover
$\{  \beta^{-1}\cV_{x_1}, \ldots, \beta^{-1}\cV_{x_N} \}$ of $\wt X.$ We have
\begin{equation*}
	\omega = \sum \psi_j \omega\rest{\cV_{x_j}} \text{ over } \hat X^{\reg}
\end{equation*}
and since each summand is in $\cD_{\wt\cL}(\hat X),$ so is $\omega.$ This shows that $\phi$ is surjective.
\end{proof}

From Lemma \ref{lem:SheafDescription}(2), the fact that $\cD_{\wt\cL}$ is closed under multiplication by elements of $\CI(\hat X)$ implies that sections of $\bL^2_{\wt\cL}\sc \Omega$ are closed under multiplication by elements of $\CI(\hat X).$
A useful consequence is that the sheaf $\bL^2_{\wt\cL}\sc \Omega$ is 
soft -- that is, every section of $\bL^2_{\wt\cL}\sc \Omega$ on a closed set extends to a section of $\bL^2_{\wt\cL}\sc \Omega$ over all of $\hat X.$
Indeed, if $K \subseteq \hat X$ is a closed set and $s \in \bL^2_{\wt\cL}\sc \Omega(K)$ then $s$ can be represented by a section $\wt s \in \bL^2_{\wt\cL}\sc \Omega(\cU)$ for some neighborhood $\cU$ of $K$ and by Lemma \ref{lem:PartitionUnity} we can find $f \in \CI(\hat X)$ equal to one on $K$ and vanishing outside of $\cU.$
Thus $fs$ is an extension of $s$ from $K$ to a section of $\bL^2_{\wt\cL}\sc \Omega(\hat X).$

\begin{lemma} \label{lem.constr}
The complex of sheaves $\bL^2_{\wt\cL}\sc\Omega$ is constructible with respect to the stratification of $\hat X.$
\end{lemma}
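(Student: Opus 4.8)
\emph{Proposal.} The plan is to check constructibility locally, one stratum at a time, by exploiting the product structure of distinguished neighborhoods together with the analytic results of \cite{ALMP13.1} applied to the link, organized as an induction on the depth of $\hat X$.

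First I would reduce to a local statement. Constructibility of $\bL^2_{\wt\cL}\sc\Omega$ means that for each stratum $Y_{n-k}$ the restriction $\bH^j(\bL^2_{\wt\cL}\sc\Omega)\rest{Y_{n-k}}$ is locally constant with finite-dimensional stalks, for every $j.$ On the regular part $\cU_2$ the complex $\bL^2_{\wt\cL}\sc\Omega$ is the sheafification of the ordinary smooth de Rham complex, so by the Poincar\'e lemma its cohomology is $\bbR$ in degree $0$ and zero otherwise; there is nothing to check there. So fix $k\geq 2$ and a point $p\in Y_{n-k},$ and pick a distinguished neighborhood $\cU\cong\bbB^{n-k}\times C^\circ(Z)$ of $p,$ where $Z$ is the link, a compact stratified space of strictly smaller depth. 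By Lemma \ref{lem:SheafDescription}(2), sections of $\bL^2_{\wt\cL}\sc\Omega$ over an open $\cV\subseteq\cU$ are the $L^2_{\loc}$ forms that are locally in $\cD_{\wt\cL},$ so the stalk at $p$ is the direct limit of $\bH^*(\Gamma(\cV_\eps,\bL^2_{\wt\cL}\sc\Omega))$ over a neighborhood basis of distinguished neighborhoods $\cV_\eps=\bbB^{n-k}_\eps\times C^\circ_\eps(Z).$

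Next I would compute this local cohomology. The boundary conditions $\wt\cL$ at the deeper strata restrict to boundary conditions $\wt\cL(Z)$ on the link $Z,$ and by the inductive hypothesis (equivalently, by the Fredholm statement of \cite{ALMP13.1} applied to the compact space $Z$) the refined $L^2$ de Rham cohomology of $(Z,\wt\cL(Z))$ is finite-dimensional. Over the truncated cone $C^\circ_\eps(Z)$ with its $\iie$ metric, the $L^2$ de Rham complex with Cheeger ideal boundary conditions at the cone point determined by $W(Y_{n-k})$ should satisfy a cone Poincar\'e lemma in the style of Cheeger: in degree $j$ below the critical degree its cohomology is the refined cohomology of $(Z,\wt\cL(Z)),$ in the critical degree $\bar n(k)$ it is the prescribed subspace coming from $W(Y_{n-k}),$ and it vanishes above; the Euclidean factor $\bbB^{n-k}_\eps$ contributes trivially by the usual $\iie$ Poincar\'e lemma (homotopy invariance in the edge directions). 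In particular the direct limit stabilizes and $\bH^j(\bL^2_{\wt\cL}\sc\Omega)_p$ is finite-dimensional. For local constancy along $Y_{n-k}$: the distinguished-neighborhood trivializations identify the local model $\bbB^{n-k}\times C^\circ(Z),$ the link bundle over $Y_{n-k},$ the flat subbundle $W(Y_{n-k}),$ and the flat Hodge bundle over $Y_{n-k}$ carrying the induced data $\wt\cL(Z)$; hence the cone computation produces cohomology sheaves that are locally constant along $Y_{n-k}.$ Together with finiteness of the stalks, this is exactly constructibility.

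The hard part will be the cone/Poincar\'e lemma for $\bL^2_{\wt\cL}\sc\Omega$ near a singular stratum: one must produce homotopy operators (or, equivalently, analyze the Hodge Laplacian on the model cone) that preserve the domain $\cD_{\wt\cL},$ keeping track simultaneously of the boundary condition being imposed at $Y_{n-k}$ and of the deeper conditions $\wt\cL$ inherited by the link $Z.$ This is precisely where one leans on the asymptotic-expansion and Fredholm machinery of \cite{ALMP13.1} (the trace maps $\alpha(\omega_\delta),\beta(\omega_\delta)$ and the closedness of $(d,\cD_{\wt\cL}(d))$), and where the softness and gluing properties coming from Lemma \ref{lem:PartitionUnity} and Lemma \ref{lem:SheafDescription} are needed in order to pass freely between the presheaf $\cU\mapsto\cD_{\wt\cL}(\cU)$ and the sheaf $\bL^2_{\wt\cL}\sc\Omega.$
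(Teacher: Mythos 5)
Your proposal is correct and follows essentially the same route as the paper: compute the cohomology of a distinguished neighborhood $\bbB^h\times C(Z)$ via the generalized (cone) Poincar\'e Lemma for the $L^2$ complex with Cheeger ideal boundary conditions, deduce finite generation of the stalks from the finite-dimensionality of $\cH^*_{\wt\cL(Z)}(Z)$ and $W(Y)_p$, and obtain local constancy along each stratum from the distinguished-neighborhood structure and the flatness of $W(Y)$. The only difference is that the cone Poincar\'e Lemma you flag as the ``hard part'' is not re-proved: the paper simply cites the local computation of \cite[\S 7]{ALMP13.1}, so no new homotopy-operator construction is needed here.
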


\begin{proof}
Recall \cite[\S 1.4]{GM2} that a complex of sheaves is constructible with respect to a given stratification if its restriction to each stratum has locally constant cohomology sheaves, with finitely generated stalks. 
Let $\cU$ be a distinguished neighborhood of a point $p \in \hat X.$
If $p \in \hat X^{\reg},$ then $\cU$ is a ball and by the Poincar\'e Lemma its reduced de Rham cohomology vanishes; otherwise if $p$ is on a singular stratum $Y,$ $\cU \cong \bbB^h \times C(Z),$ and the cohomology was computed in \cite[\S 7]{ALMP13.1}: for a fixed flat trivialization $W(Y)\rest{\cU} \cong \cU \times W(Y)_p,$ we have the generalized Poincar\'e Lemma
\begin{equation}\label{eq:PoincareLemma}
	\Ht^k(\Gamma(\cU, \bL^2_{\wt\cL}\sc\Omega))
	=\begin{cases}
	\cH^k_{\wt\cL(Z)}(Z) & \Mif k < \tfrac12\dim Z \\
	W(Y)_p & \Mif k = \tfrac12\dim Z\\
	0 & \Mif k >\tfrac12 \dim Z
	\end{cases}
\end{equation}
In either case we note that all distinguished neighborhoods of a given point have the same cohomology which hence coincides with the stalk cohomology.
Since $\cU$ is a distinguished neighborhood of all points in $\cU \cap Y,$ the local cohomology of $\bL^2_{\wt\cL}\sc \Omega$ is locally constant.
Since the groups 
$\cH^k_{\wt\cL(Z)}(Z),$ $W(Y)_p$ are    
finitely generated, the sheaf complex $\bL^2_{\wt\cL}\sc \Omega$ is constructible.
\end{proof}

Since soft resolutions can be used to compute $Ri_{k*},$ we have $Ri_{k*}\bL^2_{\wt\cL}\sc \Omega = i_{k*} \bL^2_{\wt\cL}\sc \Omega$ for all $k.$
To check the stalk vanishing condition of perversity $\bar p$ for $\sc A,$ the sheafification of a presheaf 
$A^\bullet$  at the stratum $Y_{n-k},$ it is enough to show that
\begin{equation*}
	\Ht^j(  A^{\bullet} (\cU) ) = 0  \Mif j>\bar p(k)
\end{equation*}
for all distinguished neighborhoods $\cU$ of points in $Y_{n-k}.$
Indeed, we have
\begin{equation}\label{eq:StalkCohoComp}
	\bbH^j(\sc A)_x = H^j( \sc A_x) = H^j( \lim_{\cU\ni x} A^{\bullet}(\cU) )
	=  \lim_{\cU \ni x} H^j( A^{\bullet}(\cU) ).
\end{equation}

\begin{theorem}\label{thm:MainThm}
Let $(\hat X,g)$ be a smoothly stratified pseudomanifold with a suitably scaled $\iie$-metric and let $\sc{\wt\cL}$ be an analytic mezzoperversity. 
The sheaf $\bL^2_{\wt\cL}\sc\bOm$ is in $RP(\hat X).$
\end{theorem}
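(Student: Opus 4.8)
The plan is to verify the four axioms [RP] for $\bL^2_{\wt\cL}\sc\bOm$ one at a time, using the generalized Poincaré Lemma \eqref{eq:PoincareLemma} together with the stalk computation \eqref{eq:StalkCohoComp} and the softness/constructibility already established in Lemmas \ref{lem.constr} and the discussion following it. Since $\bL^2_{\wt\cL}\sc\bOm$ is constructible, it defines an object of $D(\hat X)$, so it remains only to check the stalk and costalk conditions.

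First, for (RP1): on the regular part $\cU_2$ the $\iie$ metric is an honest smooth metric, all boundary conditions are vacuous, and the distinguished neighborhoods are ordinary balls, so by the ordinary Poincaré Lemma $\bL^2_{\wt\cL}\sc\bOm\rest{\cU_2}$ has stalk cohomology $\bbR$ in degree $0$ and $0$ elsewhere; this gives the normalization isomorphism $\nu^{\bS}:\bbR_{\cU_2}\xlra{\cong}\bL^2_{\wt\cL}\sc\bOm\rest{\cU_2}$. Next, (RP2) and (RP3): for $x\in Y_{n-k}$, by \eqref{eq:StalkCohoComp} the stalk cohomology $\Ht^\ell(i_x^*\bL^2_{\wt\cL}\sc\bOm)$ equals the cohomology $\Ht^\ell(\Gamma(\cU,\bL^2_{\wt\cL}\sc\bOm))$ of a distinguished neighborhood, which by \eqref{eq:PoincareLemma} is concentrated in degrees $0\leq \ell\leq \tfrac12\dim Z = \tfrac12(n-k-1)$. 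Since $\bar n(k) = \lfloor k/2\rfloor$ and $\tfrac12(n-k-1)\leq \bar n(k)$ precisely because the relevant stratum has codimension $k$ (one checks $\tfrac{n-k-1}{2}\leq \lfloor k/2\rfloor$ using $\dim Z = n-k-1$ and parity), the lower bound $\ell\geq 0$ gives (RP2) and the upper bound gives (RP3). One must be slightly careful to treat the inductive/iterated structure: the links $Z$ are themselves stratified, the cohomology $\cH^k_{\wt\cL(Z)}(Z)$ appearing in \eqref{eq:PoincareLemma} is the $L^2$-cohomology of $Z$ with its induced mezzoperversity, and the vanishing range for $Z$ must be invoked inductively on depth.

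For (RP4), I would use the reformulation \eqref{eq:AttachingAxiom}: it suffices to show that the attaching map $\bH^j(\bL^2_{\wt\cL}\sc\bOm_x)\lra \bH^j(Ri_{k*}i_k^*\bL^2_{\wt\cL}\sc\bOm)_x$ is an isomorphism for $j\leq\bar m(k)$. Because the sheaf is soft, $Ri_{k*}$ is computed without higher derived terms, i.e. $Ri_{k*}\bL^2_{\wt\cL}\sc\bOm = i_{k*}\bL^2_{\wt\cL}\sc\bOm$; thus the target stalk at $x$ is $\varinjlim_{\cU\ni x}\Ht^j(\Gamma(\cU\cap\cU_k,\bL^2_{\wt\cL}\sc\bOm))$, the cohomology of the punctured/deleted distinguished neighborhood $\bbB^k\times(C(Z)\setminus\{\mathrm{cone\ pt}\}) \simeq \bbB^k\times (0,1)_r\times Z$, which retracts to $Z$ and hence has cohomology $\Ht^j(\Gamma(\cdot,\bL^2_{\wt\cL(Z)}\sc\bOm))$ — the full (non-truncated) $L^2$-cohomology of the link $Z$. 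Comparing with the stalk side \eqref{eq:PoincareLemma}, which is the same link cohomology truncated at $\tfrac12\dim Z = \bar m(k)$ (for $k$ odd; for $k$ even the truncation and the Witt/non-Witt bookkeeping again force agreement through degree $\bar m(k)=\bar n(k)$), the attaching map is exactly the truncation map and is therefore an isomorphism in degrees $j\leq\bar m(k)$. This is the crux of the argument and the place that requires the most care: one needs the precise statement from \cite[\S7]{ALMP13.1} identifying the cohomology of a deleted distinguished neighborhood with the link cohomology, and one needs to match the truncation degree $\bar m(k)$ against the middle degree $\tfrac12\dim Z$ of the link, keeping track of the parity of $k$ and of the inductive hypothesis on the link's mezzoperversity $\wt\cL(Z)$.

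The main obstacle I anticipate is precisely this identification of the costalk/attaching behavior: one must be sure that passing to a deleted distinguished neighborhood genuinely computes the \emph{untruncated} $L^2$-de Rham cohomology of the link with its induced boundary conditions (so that restriction from the full neighborhood to the deleted one is the Deligne-type truncation), and that the bookkeeping of which strata are Witt versus non-Witt, and the corresponding presence or absence of the $W(Y)_p$ summand, is consistent with the perversity-$\bar m$ vanishing range. Once \eqref{eq:PoincareLemma} and its deleted-neighborhood analogue are in hand from \cite{ALMP13.1}, the axioms follow by the degree comparisons sketched above.
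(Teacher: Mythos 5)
Your proposal follows essentially the same route as the paper's proof: constructibility and softness from Lemma \ref{lem.constr}, (RP1)--(RP3) from the local Poincar\'e Lemma \eqref{eq:PoincareLemma} via the stalk computation \eqref{eq:StalkCohoComp}, and (RP4) by using softness to replace $Ri_{k*}$ by $i_{k*}$ and identifying the costalk side with the $L^2$-cohomology of the deleted distinguished neighborhood $\bbB^h\times(0,1)\times Z$ with the induced boundary conditions $\wt\cL_{k-1}(Z)$, so that the attaching map is an isomorphism in degrees $j\leq\bar m(k)$. Two small corrections: for $x\in Y_{n-k}$ (codimension $k$) the link has dimension $k-1$, not $n-k-1$, so the degree comparison is $\tfrac12(k-1)$ against $\bar n(k)=\lceil (k-2)/2\rceil$ (your parenthetical inequality and the formula $\bar n(k)=\lfloor k/2\rfloor$ are off for even $k$, though the conclusion survives); and to conclude $Ri_{k*}i_k^*\sc S=i_{k*}i_k^*\sc S$ one needs softness of the \emph{restriction} $i_k^*\sc S$ on $\cU_k$, which the paper obtains from $c$-softness together with the fact that $\cU_k$ is locally compact and countable at infinity, citing \cite{Iversen}.
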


\begin{proof}
The constructibility of the sheaf complex $\bL^2_{\wt\cL}\sc\bOm$ is established
in Lemma \ref{lem.constr}.
We check that $\bL^2_{\wt\cL}\sc\bOm$ satisfies the axioms ($RP1$)-($RP4$). 
The normalization and lower bound axioms are automatic and, by the comments above, ($RP3$) follows from  the local Poincar\'e Lemma \eqref{eq:PoincareLemma},
so we only need to check ($RP4$).

To check ($RP4$) first let us note that each $\cU_k$ is locally compact and countable at infinity.
(The latter means that it can be written as a countable union of compact sets, $(K_n),$ with $K_\ell \subseteq K_{\ell+1}^{\circ}.$ One can take the sub-level sets $K_{\ell} = \{ x \geq \tfrac1\ell \}$ of a boundary defining function $x$ for $Y_{n-k}$ as the family of compact sets.)
We know that $\sc S = \bL^2_{\wt\cL}\sc\Omega$ is soft and hence $c$-soft.
So $i_k^*\sc S$ is a $c$-soft sheaf on $\cU_k,$ and by \cite[\S IV.2, Corollary 2.3]{Iversen}, a $c$-soft sheaf on a locally compact set countable at infinity is soft.
Now since soft resolutions can be used to compute $Ri_{k*},$ we have $Ri_{k*}i_k^*\sc S = i_{k*}i_k^*\sc S.$
Then using \eqref{eq:StalkCohoComp} we have at a point $x\in Y_{n-k}$:
\begin{equation*}
	H^j(i_{k*}i_k^*\sc S)_x 
	= \lim_{\cV \ni x} H^j\Gamma(i_k^{-1}(\cV), i_k^*\sc S)
	= \lim_{\cV \ni x} H^j\Gamma(\cV \cap \cU_k, i_k^*\sc S).
\end{equation*}
Next notice that, for any distinguished neighborhood $\cV$ of $x$ we have
\begin{multline*}
	\Gamma(\cV \cap \cU_k, i_k^*\sc S) 
	= \{ \omega \in L^2_{\loc}(\cV \cap \cU_k) : \\
	\text{every } p \in \cV \cap \cU_k \text{ has a neighborhood $\cW \subseteq \cV\cap \cU_k$ such that } \omega\rest{\cW} \in \cD_{\wt\cL}(\cW) \}. 
\end{multline*}
Here the domain $\cV \cap \cU_k$ can be identified with $\bbB^h \times ( C(Z_{n-k}) \setminus x ) \cong \bbB^h \times (0,1) \times Z_{n-k}$ and hence this is the $L^2$-cohomology of $\bbB^h \times (0,1) \times Z_{n-k}$ with boundary conditions $\wt \cL_{k-1}(Z_{n-k}).$
Thus by the local computation above we have
\begin{equation*}
	H^j\Gamma(\cV \cap \cU_k, i_k^*\sc S)
	= \cH^j_{\wt \cL_{k-1}(Z_{n-k})}(Z_{n-k})
\end{equation*}
The adjunction map from $\sc S_x$ to $Ri_{k*}i_k^*\sc S_x$ corresponds to the restriction map $r$ from $\cV$ to $\cV \cap \cU_k,$
and the computations above show that 
\begin{equation*}
	r: 
	H^j\Gamma(\cV, \sc S)
	\lra
	H^j\Gamma(\cV \cap \cU_k, i_k^*\sc S)
\end{equation*}
is an isomorphism for $j \leq \bar m(k),$ so we have verified axiom $(RP4).$
\end{proof}

Thus for any analytic mezzoperversity $\wt\cL$ we now know from Theorem \ref{thm:DeligneSheaf} that there exists a topological mezzoperversity $\cL$ such that
\begin{equation}\label{eq:FirstEquality}
	\bL^2_{\wt\cL}\sc\Omega \text{ is quasi-isomorphic to } \cP_{\cL}. 
\end{equation}
The topological mezzoperversity $\cL$ consists of the degree $\bar n(k)$-cohomology sheaves of $\bL^2_{\wt\cL}\sc\Omega$ over the non-Witt singular strata, and by the local computation \eqref{eq:PoincareLemma}, this is the sheaf of flat sections of the flat bundle $W(Y_{n-k}).$
Thus we can identify an analytic mezzoperversity with a topological one by replacing each flat bundle with its sheaf of flat sections.

Conversely, given a topological mezzoperversity we show that it can be realized as an analytic mezzoperversity.
It suffices to work with the Deligne sheaf of a topological mezzoperversity $\cL$ and show that we can identify it with $\bL^2_{\wt\cL}\sc\Omega$ for some analytic mezzoperversity $\wt\cL.$
We construct $\bL^2_{\wt\cL}\sc \Omega_k \in D(\cU_k)$ inductively over $k.$
Over $\cU_3$  we have an isomorphism 
\begin{equation*}
	\cP(\cL)\rest{\cU_3} = \cP_3(\cL) \cong \bL^2\sc\Omega_3
\end{equation*}
where $\bL^2\sc\Omega_3$ is the $L^2$ sheaf without any boundary conditions.
Recall that $\cP_4(\cL)$ is constructed over $\cU_4$ by
\begin{equation*}
	\cP_4(\cL_4) 
	= \tau_{\leq \bar n(3)} (Ri_{3*}\cP_3, \bW(Y_{n-3})),
\end{equation*}
where $\bW(Y_{n-3})$ is a subsheaf of $\bH^{\bar n(3)}(Ri_{3*}\cP_3)$ over $Y_{n-3}.$
But $\cP_3$ is quasi-isomorphic to $\bL^2\sc\Omega_3,$
and we can identify the sheaf
\begin{equation*}
	\bH^{\bar n(3)}(Ri_{3*} \bL^2\sc\Omega_3)\rest{Y_{n-3}}
\end{equation*}
with the sheaf of flat sections of the flat bundle $\cH^{\mid}(H_{n-3}/Y_{n-3}).$ Indeed, the sections of the former sheaf over any distinguished neighborhood are the flat sections of $\cH^{\mid}(H_{n-3}/Y_{n-3})$ (by the local Poincar\'e Lemma).
Thus $\bW(Y_{n-3})$ is a subsheaf of 
the locally constant sheaf of flat sections of $\cH^{\mid}(H_{n-3}/Y_{n-3}) \lra Y_{n-3}.$ This allows us to identify, for some flat sub-bundle 
$W(Y_{n-3}) \subseteq \cH^{\mid}(H_{n-3}/Y_{n-3}),$
\begin{equation*}
	\bW(Y_{n-3}) = \text{ flat sections of } W(Y_{n-3}).
\end{equation*}
Now over $\cU_4$ we have both $\cP_4(\cL_4)$ and $\bL^2_{\wt\cL_4}\sc \Omega,$ where $\wt\cL_4 = \{ W(Y_{n-3}) \}.$
As mentioned in \eqref{eq:FirstEquality}, since the topological perversity $\cL_4$ corresponds to the analytic perversity $\wt\cL_4,$ we have
\begin{equation*}
	\cP_4(\cL_4) \cong \bL^2_{\wt\cL_4}\sc \Omega \text{  over }\cU_4.
\end{equation*}
It is now clear that one can proceed inductively and show that each $\bW(Y_{n-k}) \in \cL$ can be identified with the sheaf of flat sections of a flat sub-bundle $W(Y_{n-k})$ of
\begin{equation*}
	\cH^{\mid}_{\wt\cL_k(Z_{n-k})}(H_{n-k}/Y_{n-k}) \lra Y_{n-k}.
\end{equation*}
We have thus shown:

\begin{theorem}
On a compact smoothly stratified pseudomanifold $\hat X$, every topological perversity $\cL$ corresponds to an analytic perversity $\wt\cL.$
The hypercohomology of the Deligne sheaf of $\cL$ can be computed as the $L^2$-de Rham cohomology with Cheeger ideal boundary conditions induced by $\wt\cL,$
\begin{equation*}
	\bbH^j(\hat X; \cP_{\cL}) = \bbH^j(\hat X; \bL^2_{\wt\cL}\sc\Omega)
	= H^j( \cD_{\wt\cL}(d))
	= \Ht^j_{\wt\cL}(\hat X).
\end{equation*}
\end{theorem}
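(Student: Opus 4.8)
The plan is to assemble the results already in place: the statement synthesizes Theorems~\ref{thm:MainThm} and~\ref{thm:DeligneSheaf} with Lemma~\ref{lem:SheafDescription} and the inductive discussion preceding the theorem. First I would record the passage from an analytic mezzoperversity $\wt\cL$ to a topological one: by Theorem~\ref{thm:MainThm} the sheaf complex $\bL^2_{\wt\cL}\sc\Omega$ lies in $RP(\hat X)$, so Theorem~\ref{thm:DeligneSheaf} produces a unique topological mezzoperversity $\cL$ and a quasi-isomorphism $\cP_{\cL}\cong \bL^2_{\wt\cL}\sc\Omega$; unwinding the construction, $\cL$ consists of the cohomology sheaves $\bH^{\bar n(k)}(\bL^2_{\wt\cL}\sc\Omega)\rest{Y_{n-k}}$ over the odd-codimension non-Witt strata, which by the local Poincar\'e Lemma~\eqref{eq:PoincareLemma} are precisely the locally constant sheaves of flat sections of the bundles $W(Y_{n-k})$ of $\wt\cL$.

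For the reverse passage I would run the induction over $\cU_k$ indicated before the theorem: from an identification $\cP_k(\cL)\cong \bL^2_{\wt\cL_{k-1}}\sc\Omega$ over $\cU_k$ (base case $k=3$, the $L^2$ sheaf $\bL^2\sc\Omega_3$ with no boundary conditions), the Poincar\'e Lemma~\eqref{eq:PoincareLemma} identifies $\bH^{\bar n(k)}(Ri_{k*}\cP_k(\cL))\rest{Y_{n-k}}$ with the sheaf of flat sections of the Hodge bundle $\cH^{\mid}_{\wt\cL_k(Z_{n-k})}(H_{n-k}/Y_{n-k})\lra Y_{n-k}$, so every subsheaf $\bW(Y_{n-k})\in\cL$ is the sheaf of flat sections of a unique flat subbundle $W(Y_{n-k})$; taking this as the $k$-th datum of $\wt\cL$ and using the functoriality of $\tau_{\leq \bar n(k)}(\cdot,\cdot)$ yields $\cP_{k+1}(\cL_{k+1})\cong \bL^2_{\wt\cL_k}\sc\Omega$ over $\cU_{k+1}$, closing the induction. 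These two passages are mutually inverse, establishing the correspondence $\cL\leftrightarrow\wt\cL$. The chain of equalities then follows: the quasi-isomorphism $\cP_{\cL}\cong \bL^2_{\wt\cL}\sc\Omega$ gives $\bbH^j(\hat X;\cP_{\cL}) = \bbH^j(\hat X;\bL^2_{\wt\cL}\sc\Omega)$ at once; since $\bL^2_{\wt\cL}\sc\Omega$ is a complex of soft sheaves (shown above) and $\hat X$ is compact, this complex is acyclic for the global-sections functor, so its hypercohomology is the cohomology of the complex of its global sections, which by Lemma~\ref{lem:SheafDescription}(1) is $(\cD_{\wt\cL}(\hat X), d)$; hence $\bbH^j(\hat X;\bL^2_{\wt\cL}\sc\Omega) = H^j(\cD_{\wt\cL}(d))$, and the last equality $H^j(\cD_{\wt\cL}(d)) = \Ht^j_{\wt\cL}(\hat X)$ is the definition of the notation.

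The step I expect to be the main obstacle is the reverse passage $\cL\to\wt\cL$: a topological mezzoperversity is an abstract subsheaf of a cohomology sheaf, whereas an analytic mezzoperversity demands a \emph{flat subbundle} of a Hodge bundle carrying the previously imposed Cheeger ideal boundary conditions. Reconciling the two hinges on the generalized Poincar\'e Lemma~\eqref{eq:PoincareLemma} of~\cite{ALMP13.1}, which at each odd-codimension stratum --- inductively, since the Hodge bundle there itself carries the boundary conditions $\wt\cL_k$ --- identifies the pertinent cohomology sheaf with the locally constant sheaf of flat sections of the appropriate Hodge bundle, together with the fact that the monodromy of the abstract local system recovers that bundle's flat structure. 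Once these local identifications are secured, the remainder is bookkeeping.
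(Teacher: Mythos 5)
Your proposal is correct and follows essentially the same route as the paper: the forward passage via Theorem \ref{thm:MainThm} and Theorem \ref{thm:DeligneSheaf}, the reverse passage by the induction over $\cU_k$ using the generalized Poincar\'e Lemma \eqref{eq:PoincareLemma} to identify each $\bW(Y_{n-k})$ with the flat sections of a flat subbundle of the Hodge bundle, and the chain of equalities from the quasi-isomorphism, softness of $\bL^2_{\wt\cL}\sc\Omega$, and Lemma \ref{lem:SheafDescription}(1). The point you flag as the main obstacle (locally constant subsheaf versus flat subbundle) is handled in the paper in exactly the way you indicate.
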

Here the second equality comes from Lemma \ref{lem:SheafDescription} and the final one is a definition.

\end{document}